\def\titlerunning#1{\gdef\titrun{#1}}
\def\author#1{\gdef\autrun{\def\and{\unskip, }#1}\gdef\@author{#1}}
\def\address#1{{\def\and{\\\hspace*{18pt}}\renewcommand{\thefootnote}{}%
\footnote {#1}}%
\markboth{\autrun}{\titrun}}
\def\email#1{e-mail: #1}
\def\subjclass#1{{\renewcommand{\thefootnote}{}%
\footnote{\emph{Mathematics Subject Classification (2010):} #1}}}
\def\keywords#1{\par\medskip
\noindent\textbf{Keywords.} #1}
\newtheorem{thm}{Theorem}[section]
\newtheorem{cor}[thm]{Corollary}
\newtheorem{prop}[thm]{Proposition}
\theoremstyle{definition}
\newtheorem{rem}[thm]{Remark}
\numberwithin{equation}{section}
\begin{document}

%%%%% To ease editing, add:

\baselineskip=17pt

%%%%%%%%%%%%%%%%

%% In the running head, give an abbreviation of the title.
\titlerunning{}

\title{Scalar curvatures in almost Hermitian geometry and some applications}

\author{Jixiang Fu
\and
Xianchao Zhou}

\date{}

\maketitle

\address{J. Fu: School of Mathematical Sciences, Fudan University, Shanghai 200433, China; \email{majxfu@fudan.edu.cn}
\and
X. Zhou: Department of Applied Mathematics, Zhejiang University of Technology, Hangzhou 310023, China; \email{zhouxianch07@zjut.edu.cn}}

\subjclass{Primary 53B35, 53C07, 53C55; Secondary 53B05, 53C21, 53C56}

%%%%%%%%

\begin{abstract}
On an almost Hermitian manifold, we have two Hermitian scalar curvatures with respect to any canonical Hermitian connection defined
by P. Gauduchon. Explicit formulas of these two
Hermitian scalar curvatures are obtained in terms of Riemannian scalar curvature, norms of decompositions of covariant derivative of the fundamental 2-form
with respect to the Levi-Civita connection, and the codifferential of the Lee form. Then we get some inequalities of various total scalar curvatures and some
characterization results of the K\"{a}hler metric, balanced metric, locally conformally  K\"{a}hler metric and the k-Gauduchon metric.
As corollaries, we show some results related to a problem given by Lejmi-Upmeier \cite{LeU} and a conjecture given
by Angella-Otal-Ugarte-Villacampa \cite{AOUV}.

%% Keywords are optional
\keywords{J-scalar curvature, canonical Hermitian connection, Hermitian scalar curvature, the first Chern form, balanced metric, k-Gauduchon metric}
\end{abstract}

\section{Introduction}

In K\"{a}hler geometry, the complex structure is parallel with respect to the Levi-Civita connection.
There is a well connection between the complex geometry and the underlying Riemannian geometry
on a K\"{a}hler manifold. S. T. Yau \cite{Yau} proved that the Kodaira dimension of a compact  K\"{a}hler manifold with positive total
 scalar curvature must be $-\infty$. He \cite{Yau} also proved that a
compact K\"{a}hler surface is uniruled if and only if it admits a K\"{a}hler metric with positive total scalar curvature.
For a Hermitian non-K\"{a}hler manifold, the complex structure is not parallel with respect to the Levi-Civita connection. We usually choose the Chern connection instead of the Levi-Civita connection and hence we have the  (total) Chern scalar curvature.
I. Chiose, R. R\u{a}sdeaconu and I. \c{S}uvaina \cite{CRS} then successfully extended Yau's result to the non-K\"ahler case and showed that
a Moishezon manifold is uniruled if and only if it admits a balanced metric of positive total Chern scalar curvature. Recently, X. K. Yang \cite{Yang} have proved that a compact complex manifold $M$ admits a Hermitian metric with positive (resp., negative) Chern scalar curvature if and only if the canonical line bundle $K_M$ (resp., anti-canonical line bundle $ K_M^{\ast}$) is not pseudo-effective.

The purpose of this paper is to study various (total) scalar curvatures on an almost Hermitian manifold. We first recall some definitions given by P. Gauduchon \cite{Gau3}. Let $(M,J,h)$ be an almost Hermitian manifold of real dimension $2n$.
Let $\nabla$ be the Levi-Civita connection of $h$ and $F$ the associated fundamental 2-form.
P. Gauduchon \cite{Gau3} introduced a 1-parameter family $D^t$ of canonical Hermitian connections as follows:
\begin{equation}
\begin{aligned}
h(D_X^t Y,Z)
=&h(\nabla_X Y-\frac{1}{2}J(\nabla_X J)Y,Z)\\
+&\frac{t}{4}[h((\nabla_{JY}J) Z+J(\nabla_Y J)Z,X)-h((\nabla_{JZ}J) Y+J(\nabla_Z J)Y,X)].\notag
\end{aligned}
\end{equation}
There are three important cases: $D^0$ is the first canonical Hermitian connection, also called the Lichnerowicz connection \cite{Kob2};  $D^1$ is the second canonical Hermitian connection, also called the Chern connection because  it coincides
with the connection used by S. S. Chern \cite{Chern} in the integrable case; $D^{-1}$ is the Bismut connection. In the integrable case, $D^{-1}$ is characterized by its torsion being skew-symmetric \cite{Bis}.

On an almost Hermitian manifold, we have the Riemannian
scalar curvature $s$ and the $J$-scalar curvature $s_J$ associated to the Levi-Civita connection.  The relations and some applications of these two scalar curvatures are discussed in  section 2. Meanwhile, for any canonical Hermitian connection $D^t$, by using contractions of  the corresponding curvature tensor
$K^t$, two Hermitian scalar curvatures $s_1(t)$ and $s_2(t)$  can be defined. In fact, for a given unitary frame field $\{u_i\}_{i=1,2,...,n}$, we define
\begin{equation*}
s_1(t)=K^t(u_{\bar i}, u_i,u_j,u_{\bar j})\quad  \textup{and}\quad  s_2(t)=K^t(u_{\bar i},u_j,u_i,u_{\bar j}).
\end{equation*}
For a Hermitian manifold with the Chern connection $D^1$ or the Bismut connection $D^{-1}$, the relations between the corresponding Hermitian scalar curvatures and the Riemannian scalar curvature have been presented in many papers  \cite{Gau1,AlI,IP1,LY}.
In this paper, we mainly establish the following two identities.

\begin{thm}\textup{(=Theorem 4.3.)}
Let $(M,J,h)$ be an almost Hermitian manifold of real dimension $2n$.
Then
\begin{equation}
\begin{aligned}
s_1(t)
=&\frac{s}{2}-\frac{5}{12}|(dF)^-|^2+\frac{1}{16}|N^0|^2+\frac{1}{4}|(dF)_0^+|^2\notag\\
&+[\frac{1}{4(n-1)}
+\frac{t-1}{2}]|\alpha_F|^2+\frac{t-2}{2}\delta \alpha_F
\end{aligned}
\end{equation}
and
\begin{equation}
\begin{aligned}
s_2(t)
=&\frac{s}{2}-\frac{1}{12}|(dF)^-|^2+\frac{1}{32}|N^0|^2-\frac{t^2-2t}{4}|(dF)_0^+|^2\notag\\
&-[\frac{t^2-2t}{4(n-1)}+\frac{(t+1)^2}{8}]|\alpha_F|^2-\frac{t+1}{2}\delta \alpha_F.
\end{aligned}
\end{equation}
\end{thm}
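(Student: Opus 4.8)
The plan is to write $D^t$ as a perturbation of the Levi-Civita connection, $D^t = \nabla + A^t$, where the difference tensor $A^t(X,Y) := D^t_X Y - \nabla_X Y$ is read off directly from Gauduchon's defining formula. From that formula $A^t$ is \emph{affine} in $t$, say $A^t = A^0 + t\,C$, and every term of $A^t$ is algebraically built from $\nabla J$ (equivalently from $\nabla F$). The first preparatory step is therefore to record the pointwise $U(n)$-irreducible (Gray--Hervella) decomposition of $\nabla F$ into the mutually orthogonal pieces giving rise to $(dF)^-$, $(dF)_0^+$, $N^0$ and $\alpha_F$, together with the resulting expressions for the various full and partial contractions of $\nabla F \otimes \nabla F$ in terms of the four squared norms. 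These algebraic identities are what will ultimately turn the raw contractions into the claimed right-hand sides.

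The computational backbone is the standard curvature-comparison identity for two connections differing by a tensor. Writing $A^t_X := A^t(X,\cdot)$, one has
\begin{equation*}
K^t(X,Y)Z = R(X,Y)Z + (\nabla_X A^t)(Y,Z) - (\nabla_Y A^t)(X,Z) + A^t_X A^t_Y Z - A^t_Y A^t_X Z,
\end{equation*}
where $R$ is the Riemann curvature of $\nabla$. I would substitute this into the two prescribed contractions $s_1(t)=K^t(u_{\bar i},u_i,u_j,u_{\bar j})$ and $s_2(t)=K^t(u_{\bar i},u_j,u_i,u_{\bar j})$ over the unitary frame and organize by order in $A^t$. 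The zeroth-order term is the corresponding contraction of $R$, a complex Ricci-type trace equal to a multiple of the $J$-scalar curvature $s_J$, which I then rewrite in terms of $s$ and $\nabla F$-norms via the relation established in section 2. The first-order terms $\nabla A^t$ are, after tracing, scalars linear in the second covariant derivative of $F$; the only such $U(n)$-invariant that is a genuine divergence is a multiple of $\delta\alpha_F$ (the codifferential of the Lee form, itself a trace of $\nabla F$), which is how the $\frac{t-2}{2}\delta\alpha_F$ and $-\frac{t+1}{2}\delta\alpha_F$ terms appear. The second-order terms $A^t_X A^t_Y - A^t_Y A^t_X$ are quadratic in $\nabla F$ and, after substituting the decomposition, yield the squared-norm terms; since $A^t$ is affine in $t$ they are at most quadratic in $t$, matching the $t^2-2t$ and $(t+1)^2$ coefficients in $s_2(t)$.

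The main obstacle is the bookkeeping in the second-order contraction: the two index patterns defining $s_1$ and $s_2$ mix barred and unbarred frame vectors differently, so each Gray--Hervella component contributes with its own weight, and extracting the exact coefficients $-\frac{5}{12}$, $\frac{1}{16}$, $\frac14$, and so on requires the orthogonality of the components, the first Bianchi identity for $R$, and the precise algebraic link between $N^0$ and the $(2,0)+(0,2)$ part of $\nabla J$. A structurally important point to verify is that under the $s_1$-contraction the genuinely quadratic-in-$t$ contributions cancel, so that $s_1(t)$ is linear in $t$, whereas they survive for $s_2(t)$.

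Finally, I would validate the computation by specialization. In the K\"{a}hler case $\nabla J = 0$ forces $A^t = 0$ and $K^t = R$, so both formulas must collapse to $s_1 = s_2 = s/2$; and at $t=1$ (Chern) and $t=-1$ (Bismut) the identities must reproduce the known formulas for those two cases recalled in the Introduction. These checks pin down the sign and normalization conventions and guard against contraction errors in the many-term expansion.
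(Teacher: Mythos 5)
Your proposal is correct in outline and rests on the same two pillars as the paper's argument --- a connection-perturbation/curvature-comparison identity together with the Gray--Hervella norm dictionary for $\nabla F$ --- but it is organized genuinely differently. The paper proceeds in two stages: it first compares $D^0$ with the Levi-Civita connection via the tensorial identity (3.3), which is exactly your comparison formula specialized to $A^0=-\tfrac12 J\nabla J$ with the first-order $\nabla A$ terms already absorbed into the symmetrized curvature $\tfrac12[R(X,Y,\cdot,\cdot)+R(JX,JY,\cdot,\cdot)]$ by the Ricci identity, yielding $s_1(0)$ and $s_2(0)$ in Theorem 3.1; it then perturbs \emph{within} the Gauduchon family using moving frames, $\psi(t)=\varphi-t\gamma$, and reads $K^t$ off the structure equations (Proposition 4.2). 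Crucially, the quadratic bookkeeping is carried out not on contractions of $\nabla F\otimes\nabla F$ but on the Chern torsion components $T^i_{jk}$, $T^i_{\bar j\bar k}$, for which (4.9)--(4.11) give a clean dictionary into $|(dF)^{\pm}|^2$, $|N|^2$, $|\alpha_F|^2$; this is what makes coefficients such as $-\tfrac{5}{12}$ and $\tfrac{1}{16}$ extractable with modest effort, and it is the main labor-saving device your plan forgoes. Your single-stage route from $\nabla$ is viable and more uniform in $t$, but one caution: the traced first-order terms do not contribute only the divergence $\delta\alpha_F$ --- for instance $\langle d\delta F,F\rangle=|\alpha_F|^2+\delta\alpha_F$ (used in (4.35)) mixes a quadratic piece with the divergence, and commuting covariant derivatives to isolate $\delta\alpha_F$ reintroduces curvature terms --- so the linear-in-derivative bookkeeping needs the same care as the quadratic part. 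Your structural checks (linearity of $s_1(t)$ in $t$ because the $t^2-2t$ contraction cancels under that index pattern, and the K\"ahler and $t=\pm1$ specializations) are exactly the features visible in (4.22) and (4.33)--(4.34), and they would indeed catch normalization errors.
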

Here, $(dF)^-$, $N^0$, $(dF)_0^+$ and $\alpha_F$ are the four components of $\nabla F$ \cite{Gau3}. By letting some of them equal zero,
A. Gray and L. M. Hervella \cite{GrH} defined 16 classes (4 classes for $n=2$) of almost Hermitian manifolds. We will use the same notations
of classes of almost Hermitian manifolds as in \cite{GrH}. For example, the class $\mathcal{W}_1\oplus \mathcal{W}_4$: $N^0=(dF)_0^+=0$;
the class $\mathcal{W}_3\oplus \mathcal{W}_4$: $(dF)^-=N^0=0$; the class $\mathcal{W}_2\oplus\mathcal{W}_3\oplus \mathcal{W}_4$: $(dF)^-=0$.

In the following, we give some applications of Theorem 1.1.

\begin{thm}\textup{(=Theorem 5.1.)}
Let $(M,J,h)$ be a compact almost Hermitian manifold of real dimension $2n$ ($n\geq3$).

(1) If $(M,J,h)\in \mathcal{W}_2\oplus \mathcal{W}_3\oplus \mathcal{W}_4$
and $t\geq 1-\frac{1}{2(n-1)}$, then
\begin{equation*}
\int_M [2s_1(t)-s]dv\geq 0.
\end{equation*}
The equality holds if and only if $(M,J,h)$ is a locally conformally  K\"{a}hler manifold when $t=1-\frac{1}{2(n-1)}$ or
$(M,J,h)$ is a K\"{a}hler manifold when $t>1-\frac{1}{2(n-1)}$.

(2) If $(M,J,h)\in \mathcal{W}_1\oplus \mathcal{W}_4$
and $t\leq 1-\frac{1}{2(n-1)}$, then
$$\int_M [2s_1(t)-s]dv\leq 0.$$
The equality holds  if and only if $(M,J,h)$ is a locally conformally  K\"{a}hler manifold when $t=1-\frac{1}{2(n-1)}$ or
$(M,J,h)$ is a K\"{a}hler manifold when $t<1-\frac{1}{2(n-1)}$.
\end{thm}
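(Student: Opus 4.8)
The plan is to read off the explicit formula for $s_1(t)$ from Theorem 4.3 and form the combination $2s_1(t)-s$, which cancels the leading $\tfrac{s}{2}$ term and leaves a pointwise identity in the four components of $\nabla F$ together with one codifferential term:
\begin{equation*}
2s_1(t)-s=-\frac{5}{6}|(dF)^-|^2+\frac{1}{8}|N^0|^2+\frac{1}{2}|(dF)_0^+|^2+\left[\frac{1}{2(n-1)}+(t-1)\right]|\alpha_F|^2+(t-2)\delta\alpha_F.
\end{equation*}
I would then integrate this identity over the compact manifold $M$. The crucial observation is that $\int_M \delta\alpha_F\,dv=0$, since the integral of the codifferential of any $1$-form over a closed manifold vanishes by Stokes' theorem. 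This removes the only term whose sign is not controlled pointwise, reducing the problem to a sign analysis of a sum of (signed) squares.

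For part (1), the hypothesis $(M,J,h)\in\mathcal{W}_2\oplus\mathcal{W}_3\oplus\mathcal{W}_4$ means $(dF)^-=0$, so the single term with a negative coefficient disappears. What survives is $\tfrac{1}{8}|N^0|^2+\tfrac{1}{2}|(dF)_0^+|^2$, which is manifestly non-negative, plus $\bigl[\tfrac{1}{2(n-1)}+(t-1)\bigr]|\alpha_F|^2$. The constraint $t\geq 1-\tfrac{1}{2(n-1)}$ is exactly the statement that the coefficient $\tfrac{1}{2(n-1)}+(t-1)$ is non-negative, so the whole integrand is non-negative and $\int_M[2s_1(t)-s]\,dv\geq 0$ follows at once. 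For part (2), the hypothesis $(M,J,h)\in\mathcal{W}_1\oplus\mathcal{W}_4$ means $N^0=(dF)_0^+=0$, removing the two positive-coefficient square terms. What remains is $-\tfrac{5}{6}|(dF)^-|^2$ together with the $|\alpha_F|^2$ term, and here $t\leq 1-\tfrac{1}{2(n-1)}$ makes the coefficient of $|\alpha_F|^2$ non-positive; thus both surviving terms are non-positive and $\int_M[2s_1(t)-s]\,dv\leq 0$.

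The equality discussion is where the Gray--Hervella dictionary enters, and it is the only part requiring care. Since after integrating away $\delta\alpha_F$ the integrand is a sum of terms of a single sign, equality forces each term to vanish pointwise. When $t$ equals the threshold $1-\tfrac{1}{2(n-1)}$, the coefficient of $|\alpha_F|^2$ vanishes identically, so equality only forces the remaining components to be zero: in both parts one is left with $N^0=(dF)^-=(dF)_0^+=0$, i.e. the manifold lies in $\mathcal{W}_4$, which for $n\geq 3$ is precisely the class of locally conformally K\"ahler manifolds. When $t$ lies strictly beyond the threshold, the coefficient of $|\alpha_F|^2$ is non-zero, so equality additionally forces $\alpha_F=0$; combined with the vanishing of the other three components this means $\nabla F$ has all four components zero, which characterizes the K\"ahler condition.

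I expect the computation of $2s_1(t)-s$ to be a routine substitution into Theorem 4.3, so the main point is not a hard estimate but the organization of the sign bookkeeping around the single critical value $t=1-\tfrac{1}{2(n-1)}$, where the coefficient $\tfrac{1}{2(n-1)}+(t-1)$ changes sign. The one conceptual step to get right is that the same threshold governs both the inequality direction and the equality refinement, and that one must correctly identify the surviving Gray--Hervella class ($\mathcal{W}_4$ as locally conformally K\"ahler for $n\geq 3$, and the trivial class as K\"ahler) to state the rigidity conclusions precisely.
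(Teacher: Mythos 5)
Your proposal is correct and follows essentially the same route as the paper: substitute the formula for $s_1(t)$ from Theorem 4.3 into $2s_1(t)-s$, integrate so that $\delta\alpha_F$ drops out, kill the appropriate terms using the Gray--Hervella class assumption, and read off the sign from the coefficient $\frac{1}{2(n-1)}+(t-1)$, with the equality analysis distinguishing the threshold value (where only $\alpha_F$ can survive, giving $\mathcal{W}_4$, i.e.\ locally conformally K\"ahler for $n\geq 3$) from the strict case (where $\alpha_F=0$ is also forced, giving K\"ahler). This matches the paper's argument in both structure and detail.
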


Recently, for the Chern connection $D^1$,  M. Lejmi and  M. Upmeier have given a problem in  Remark 3.3 in \cite{LeU}:  do higher-dimensional compact almost Hermitian non-K\"{a}hler manifolds with $2s_1(1)=s$ exist? For this question, from Theorem 1.2 (1), we have the following non-existence result.

\begin{cor}\textup{(=Corollary 5.3.)}
Let $(M,J,h)$ be a compact almost Hermitian manifold of real dimension $2n$ ($n\geq3$).  If $(M,J,h)\in \mathcal{W}_2\oplus \mathcal{W}_3
\oplus \mathcal{W}_4$ and $2s_1(1)=s$, then $(M,J,h)$ is a K\"{a}hler manifold.\\
\end{cor}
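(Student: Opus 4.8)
The plan is to read the result off directly from Theorem 5.1(1) evaluated at the parameter value $t=1$, so that the entire argument reduces to locating $t=1$ correctly within the equality discussion of that theorem.

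First I would compare $t=1$ with the threshold $1-\frac{1}{2(n-1)}$. Since $n\geq 3$ gives $2(n-1)\geq 4$ and hence $0<\frac{1}{2(n-1)}\leq \frac14$, we obtain $1-\frac{1}{2(n-1)}<1$. Thus the hypothesis $t\geq 1-\frac{1}{2(n-1)}$ required by Theorem 5.1(1) is satisfied at $t=1$, and crucially it is satisfied with the \emph{strict} inequality $t>1-\frac{1}{2(n-1)}$.

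Next I would convert the pointwise hypothesis into an integral statement: the assumption $2s_1(1)=s$ at every point integrates to $\int_M[2s_1(1)-s]\,dv=0$. Since $(M,J,h)\in\mathcal{W}_2\oplus\mathcal{W}_3\oplus\mathcal{W}_4$, Theorem 5.1(1) applies with $t=1$ and tells us both that this integral is nonnegative and that equality holds precisely under the stated characterization. Because we have established $t=1>1-\frac{1}{2(n-1)}$, the applicable branch of the equality case is the one reading ``K\"{a}hler when $t>1-\frac{1}{2(n-1)}$'', so the vanishing of the integral forces $(M,J,h)$ to be a K\"{a}hler manifold.

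The only delicate point, and therefore the single obstacle, is this placement of $t=1$ strictly above the boundary value $1-\frac{1}{2(n-1)}$: on the boundary itself the equality case of Theorem 5.1(1) yields merely a locally conformally K\"{a}hler structure, so one must confirm the strict inequality in order to reach the stronger K\"{a}hler conclusion. For $n\geq 3$ this strict inequality is automatic, and the corollary follows immediately.
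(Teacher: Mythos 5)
Your proposal is correct and follows exactly the paper's route: the paper derives Corollary 5.3 directly from Theorem 5.1(1) by specializing to $t=1$, noting that $1>1-\frac{1}{2(n-1)}$ for $n\geq 3$ so the strict-inequality (K\"{a}hler) branch of the equality case applies. Your additional care in checking the strict inequality and in passing from the pointwise identity to the vanishing integral is exactly the (implicit) content of the paper's one-line deduction.
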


\begin{thm}\textup{(=Theorem 5.4.)}
Let $(M,J,h)$ be a compact almost Hermitian manifold of real dimension $2n$ ($n\geq 3$).

(1) If $(M,J,h)\in \mathcal{W}_2\oplus \mathcal{W}_3\oplus \mathcal{W}_4$, $t\in (-\infty, -3-2\sqrt{3}]\cup [-3+2\sqrt{3},+\infty)$, then
$$\int_M[s_1(t)-s_2(t)]dv\geq 0.$$
The equality holds if and only if $(M,J,h)$ is a balanced Hermitian manifold when $t=1$ or  $(M,J,h)$ is a K\"{a}hler manifold when $t\neq 1$.

(2) If $(M,J,h)\in \mathcal{W}_1\oplus \mathcal{W}_4$, $t\in [-1,\frac{1}{3}]$, then
$$\int_M[s_1(t)-s_2(t)]dv\leq 0.$$
The equality holds if and only if  $(M,J,h)$ is a K\"{a}hler manifold.
\end{thm}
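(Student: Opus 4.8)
The plan is to form the pointwise difference $s_1(t)-s_2(t)$ directly from the two identities of Theorem 4.3 and then integrate over $M$. Subtracting the formulas, the Riemannian terms $\tfrac{s}{2}$ cancel, and a short computation gives
\begin{equation*}
s_1(t)-s_2(t)=-\tfrac13|(dF)^-|^2+\tfrac{1}{32}|N^0|^2+\tfrac{(t-1)^2}{4}|(dF)_0^+|^2+c(t,n)\,|\alpha_F|^2+\tfrac{2t-1}{2}\,\delta\alpha_F,
\end{equation*}
where $c(t,n)=\tfrac{(t-1)^2}{4(n-1)}+\tfrac{t-1}{2}+\tfrac{(t+1)^2}{8}=\tfrac{(t-1)^2}{4(n-1)}+\tfrac{t^2+6t-3}{8}$. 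Since $M$ is compact without boundary and $\delta\alpha_F$ is the codifferential of a $1$-form, $\int_M\delta\alpha_F\,dv=0$; this is the one place where compactness is essential, as it removes the only term of indefinite sign. Thus $\int_M[s_1(t)-s_2(t)]\,dv$ equals the integral of the remaining four quadratic terms, whose sign is controlled by the class hypotheses and by the sign of $c(t,n)$.

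For part (1) the hypothesis $(M,J,h)\in\mathcal{W}_2\oplus\mathcal{W}_3\oplus\mathcal{W}_4$ means $(dF)^-=0$, so the only negative term disappears and the integrand becomes $\tfrac{1}{32}|N^0|^2+\tfrac{(t-1)^2}{4}|(dF)_0^+|^2+c(t,n)|\alpha_F|^2$. The first two coefficients are non-negative, so everything reduces to the sign of $c(t,n)$. Because the $n$-dependent piece $\tfrac{(t-1)^2}{4(n-1)}$ is non-negative, $c(t,n)\ge\tfrac{t^2+6t-3}{8}$, and the latter is $\ge0$ precisely on $t\le -3-2\sqrt3$ or $t\ge -3+2\sqrt3$, which is the stated range; hence the integral is $\ge0$. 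For equality every surviving term must vanish: $N^0=0$ and, since $c(t,n)>0$ for finite $n\ge3$ on this range, $\alpha_F=0$, while $(dF)_0^+$ is forced to vanish exactly when the coefficient $\tfrac{(t-1)^2}{4}$ is nonzero, i.e. when $t\neq1$. Together with $(dF)^-=0$ this gives $\nabla F=0$, a K\"ahler metric, when $t\neq1$; when $t=1$ one is left with $(dF)^-=N^0=\alpha_F=0$ and $(dF)_0^+$ free, which is exactly the class $\mathcal{W}_3$ of balanced Hermitian metrics.

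For part (2) the hypothesis $(M,J,h)\in\mathcal{W}_1\oplus\mathcal{W}_4$ means $N^0=(dF)_0^+=0$, so the integrand collapses to $-\tfrac13|(dF)^-|^2+c(t,n)|\alpha_F|^2$. The first term is $\le0$, so it suffices to show $c(t,n)\le0$ on $[-1,\tfrac13]$. Since $c(t,n)$ is decreasing in $n$, for $n\ge3$ one has $c(t,n)\le c(t,3)=\tfrac{t^2+2t-1}{4}$, an upward parabola with roots $-1\pm\sqrt2$; evaluating at the endpoints shows $c(t,3)<0$ throughout $[-1,\tfrac13]$, hence $c(t,n)<0$ there. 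Thus both terms are non-positive and the integral is $\le0$, with equality forcing $(dF)^-=0$ and (as $c(t,n)<0$) $\alpha_F=0$. Combined with the class assumption this yields $\nabla F=0$, a K\"ahler metric; note that the hypothesis $n\ge3$ is precisely what guarantees the strict inequality $c(t,n)<0$ and thereby excludes a locally conformally K\"ahler equality case.

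The computations are routine once Theorem 4.3 is granted; the real work is organizational. The main obstacle is the sign analysis of the rational coefficient $c(t,n)$ as a function of both $t$ and $n$: for part (1) one must observe that the $n$-dependent piece only helps, so the threshold is governed by the limit $\tfrac{t^2+6t-3}{8}$ with roots $-3\pm2\sqrt3$, whereas for part (2) one uses monotonicity in $n$ to reduce to $n=3$ and check negativity on $[-1,\tfrac13]$. The fact that $c(t,n)\neq0$ for finite $n\ge3$ is what pins down the equality cases. A second delicate point is reading off the geometry in the equality discussion, especially the degenerate value $t=1$ in part (1), where the $|(dF)_0^+|^2$ coefficient vanishes and the equality class enlarges from K\"ahler to balanced.
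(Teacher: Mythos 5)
Your proposal is correct and follows essentially the same route as the paper: subtract the two identities of Theorem 4.3, integrate so that the $\delta\alpha_F$ term drops out by compactness, and read off the sign of each remaining quadratic term under the class hypotheses. The only (minor, and arguably cleaner) difference is in analyzing the $|\alpha_F|^2$ coefficient: you split it as $\tfrac{(t-1)^2}{4(n-1)}+\tfrac{t^2+6t-3}{8}$ and use positivity of the first piece (part 1) and monotonicity in $n$ down to $n=3$ (part 2), whereas the paper tracks the roots of $(n+1)t^2+(6n-10)t+5-3n$ as monotone sequences converging to $-3\pm2\sqrt3$.
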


Recently, B.  Yang and F. Y. Zheng \cite{YZ1}, and D. Angella, A. Otal, L. Ugarte and R. Villacampa \cite{AOUV}
have introduced the definition of K\"{a}hler-like Hermitian structure on a Hermitian manifold.  As a corollary of Theorem 1.4 (1),  we prove conjecture 2 in \cite{AOUV} for $t\in (-\infty, -3-2\sqrt{3}]\cup [-3+2\sqrt{3},1)\cup(1,+\infty)$.

\begin{cor}\textup{(=Corollary 5.7.)}
Let $(M,J,h)$ be a compact Hermitian manifold of real dimension $2n$ ($n\geq 3$).  If $(M,J,h)$ is K\"{a}hler-like for a canonical Hermitian connection $D^t$ with $t\in (-\infty, -3-2\sqrt{3}]\cup [-3+2\sqrt{3},1)\cup(1,+\infty)$, then $(M,J,h)$ is a K\"{a}hler manifold.
\end{cor}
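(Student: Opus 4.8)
The plan is to deduce this directly from Theorem 5.4(1) together with the defining property of a K\"{a}hler-like structure, so that the corollary becomes a matching of hypotheses plus an equality-case analysis. First I would record that a Hermitian manifold has integrable $J$, so the two components $(dF)^-$ and $N^0$ of $\nabla F$ both vanish; hence $(M,J,h)$ lies in the Gray--Hervella class $\mathcal{W}_3\oplus\mathcal{W}_4$, which is contained in $\mathcal{W}_2\oplus\mathcal{W}_3\oplus\mathcal{W}_4$ (the class characterized by $(dF)^-=0$). This places us squarely in the hypothesis of Theorem 5.4(1).

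The conceptual heart is the second step: translating the K\"{a}hler-like hypothesis into the analytic statement $s_1(t)=s_2(t)$. By definition, $(M,J,h)$ being K\"{a}hler-like for $D^t$ means the curvature tensor $K^t$ satisfies all the index symmetries enjoyed by the curvature of a K\"{a}hler metric; in particular $K^t$ is invariant under the interchange of its second and third arguments that relates $K^t(u_{\bar i},u_i,u_j,u_{\bar j})$ to $K^t(u_{\bar i},u_j,u_i,u_{\bar j})$. Consequently $s_1(t)=s_2(t)$ at every point, so that $\int_M[s_1(t)-s_2(t)]\,dv=0$.

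Finally I would invoke Theorem 5.4(1). The admissible range in the corollary, namely $t\in(-\infty,-3-2\sqrt{3}]\cup[-3+2\sqrt{3},1)\cup(1,+\infty)$, is exactly the range $(-\infty,-3-2\sqrt{3}]\cup[-3+2\sqrt{3},+\infty)$ of that theorem with the single point $t=1$ removed. Thus the inequality $\int_M[s_1(t)-s_2(t)]\,dv\geq 0$ holds together with its equality characterization, and since the integral vanishes we are in the equality case. Because the corollary's range excludes $t=1$, the weaker ``balanced'' alternative cannot occur, and the only possible conclusion is that $(M,J,h)$ is K\"{a}hler.

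The main obstacle is the second step: one must verify that the K\"{a}hler-like symmetry of $K^t$ is strong enough to force the two distinct contractions $s_1(t)$ and $s_2(t)$ to coincide pointwise. This reduces to checking the relevant symmetry of the curvature tensor at a single point, which is precisely the identity holding automatically in the K\"{a}hler case; once established, the remainder is a formal application of the already-proved Theorem 5.4(1), with the role of the excluded value $t=1$ being exactly to rule out the balanced-but-non-K\"{a}hler equality case.
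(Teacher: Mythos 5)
Your proposal is correct and follows essentially the same route as the paper: the authors likewise observe (Remark 5.6) that the K\"ahler-like condition forces $\int_M s_1(t)\,dv=\int_M s_2(t)\,dv$, note that a Hermitian manifold lies in $\mathcal{W}_2\oplus\mathcal{W}_3\oplus\mathcal{W}_4$, and then read off the equality case of Theorem 5.4(1) with $t\neq 1$ excluded to rule out the merely balanced alternative. Your added justification that the K\"ahler-like symmetry $K^t_{\bar i jk\bar l}=K^t_{\bar i kj\bar l}$ gives the pointwise identity $s_1(t)=s_2(t)$ is a correct filling-in of the step the paper dismisses as ``obvious.''
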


Although $t=-1$ (i.e., the Bismut connection) is not contained in Corollary 1.5, we have

\begin{thm}\textup{(=Theorem 5.8.)}
Let $(M,J,h)$ be a compact Hermitian manifold of real dimension $2n$ ($n\geq 3$). Then
$$\int_M[s_1(-1)-s_2(-1)]dv=\int_M(|dF|^2-|\alpha_F|^2)dv.$$
In particular, if $h$ is a Gauduchon metric (i.e., $\delta\alpha_F=0$)  and $s_1(-1)=s_2(-1)$, then
$h$ is a $k$-Gauduchon metric \cite{FWW}, that is
$$\sqrt{-1}\partial\bar{\partial} (F^k)\wedge F^{n-k-1}=0,$$ for $k=1,2,...,n-1$.
\end{thm}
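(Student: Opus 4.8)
The plan is to specialize Theorem 4.3 to the Bismut value $t=-1$ and exploit integrability. Since $(M,J,h)$ is Hermitian, it lies in the class $\mathcal{W}_3\oplus\mathcal{W}_4$, so $N^0=0$ and $(dF)^-=0$, and only the components $(dF)_0^+$ and $\alpha_F$ survive. Substituting $t=-1$ into the two formulas of Theorem 4.3 and subtracting, the Riemannian term $s/2$ cancels and, collecting the remaining coefficients, I expect
\begin{equation*}
s_1(-1)-s_2(-1)=|(dF)_0^+|^2+\frac{2-n}{n-1}|\alpha_F|^2-\frac{3}{2}\delta\alpha_F .
\end{equation*}
To put this in the stated form I would use the Lefschetz decomposition of the $3$-form $dF$ on a Hermitian manifold, $dF=(dF)_0^++\frac{1}{n-1}\alpha_F\wedge F$, together with $|\alpha_F\wedge F|^2=(n-1)|\alpha_F|^2$, which yields the pointwise identity $|(dF)_0^+|^2=|dF|^2-\frac{1}{n-1}|\alpha_F|^2$. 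Inserting it gives the clean pointwise formula $s_1(-1)-s_2(-1)=|dF|^2-|\alpha_F|^2-\frac{3}{2}\delta\alpha_F$, and integrating over the compact manifold $M$ with $\int_M\delta\alpha_F\,dv=0$ produces the first assertion.

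For the second assertion, assume $h$ is Gauduchon, so $\delta\alpha_F=0$, and $s_1(-1)=s_2(-1)$. The pointwise formula above then forces $|dF|^2=|\alpha_F|^2$ everywhere. It remains to deduce $\sqrt{-1}\partial\bar\partial(F^k)\wedge F^{n-k-1}=0$ for every $k$. First I would expand, using $\partial\bar\partial F^k=k(k-1)F^{k-2}\wedge\partial F\wedge\bar\partial F+kF^{k-1}\wedge\partial\bar\partial F$, to obtain
\begin{equation*}
\partial\bar\partial(F^k)\wedge F^{n-k-1}=k(k-1)F^{n-3}\wedge\partial F\wedge\bar\partial F+kF^{n-2}\wedge\partial\bar\partial F .
\end{equation*}
Taking $k=n-1$ and invoking the Gauduchon condition $\partial\bar\partial F^{n-1}=0$ expresses the second term through the first, $F^{n-2}\wedge\partial\bar\partial F=-(n-2)F^{n-3}\wedge\partial F\wedge\bar\partial F$, so that for general $k$
\begin{equation*}
\partial\bar\partial(F^k)\wedge F^{n-k-1}=-k(n-k-1)F^{n-3}\wedge\partial F\wedge\bar\partial F .
\end{equation*}
Thus all the $k$-Gauduchon quantities are proportional to the single top-degree form $\sqrt{-1}F^{n-3}\wedge\partial F\wedge\bar\partial F$ (the case $k=n-1$ being the Gauduchon hypothesis itself).

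The final step is to evaluate this form. Writing the Lefschetz decomposition of the $(2,1)$-form $\partial F=(\partial F)_0+F\wedge\psi_1$ with $\psi_1=\frac{1}{n-1}\Lambda\partial F$, and using that $(\partial F)_0\wedge F^{n-2}=0$ annihilates the cross terms, the Hodge--Riemann bilinear relations for primitive $(2,1)$- and $(1,0)$-forms should give
\begin{equation*}
\sqrt{-1}\,F^{n-3}\wedge\partial F\wedge\bar\partial F=\Big[-(n-3)!\,|(\partial F)_0|^2+(n-1)!\,|\psi_1|^2\Big]\frac{F^n}{n!} .
\end{equation*}
Converting the two primitive norms by means of $|\partial F|^2=\tfrac12|dF|^2$, $|(\partial F)_0|^2=\tfrac12|(dF)_0^+|^2$, and $|\partial F|^2=|(\partial F)_0|^2+(n-1)|\psi_1|^2$, I expect the bracket to collapse to a nonzero multiple of $|\alpha_F|^2-|dF|^2$, namely $\sqrt{-1}F^{n-3}\wedge\partial F\wedge\bar\partial F=\frac{(n-3)!}{2}(|\alpha_F|^2-|dF|^2)\frac{F^n}{n!}$. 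Since $|dF|^2=|\alpha_F|^2$, this form vanishes identically, and with it every $k$-Gauduchon quantity, proving the claim.

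The main obstacle is precisely this last computation: tracking the signs in the Hodge--Riemann relations and verifying that the two separate norm contributions recombine exactly into $|dF|^2-|\alpha_F|^2$ with a nonzero constant. This is where a mismatch in the normalization of $\alpha_F$ would most easily slip in, so I would pin down the constant by cross-checking against the first (integral) identity, whose right-hand side $|dF|^2-|\alpha_F|^2$ already fixes the convention consistently with Theorem 4.3.
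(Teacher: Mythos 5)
Your proposal is correct, and the two halves deserve separate comment. The first identity is obtained exactly as in the paper: specialize the scalar curvature formulas to $t=-1$ in the Hermitian case, arrive at the pointwise relation $s_1(-1)-s_2(-1)=|dF|^2-|\alpha_F|^2-\tfrac{3}{2}\delta\alpha_F$, and integrate; your intermediate coefficient $\frac{2-n}{n-1}$ and the conversion $|(dF)_0^+|^2=|dF|^2-\frac{1}{n-1}|\alpha_F|^2$ both check out against the paper's norm formula (2.7). For the second assertion, however, you take a genuinely different route. The paper computes both $\langle\sqrt{-1}\,\partial F\wedge\bar\partial F,F^3\rangle$ and $\langle\sqrt{-1}\,\partial\bar\partial F,F^2\rangle$ directly in Chern-connection torsion components (the latter requiring the covariant derivative term $\overline{T^i_{ji,\bar j}}$, which produces the $\delta\alpha_F$ contribution), and thereby obtains the unconditional formula $\sqrt{-1}\partial\bar\partial(F^k)\wedge F^{n-k-1}=k\frac{(n-3)!}{2}[(n-k-1)(|dF|^2-|\alpha_F|^2)-(n-2)\delta\alpha_F]\,dv$, from which the theorem is read off. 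You instead use the Gauduchon hypothesis at the outset, in the form $\partial\bar\partial F^{n-1}=0$, to eliminate the $F^{n-2}\wedge\partial\bar\partial F$ term and reduce every $k$-Gauduchon quantity to the single form $\sqrt{-1}F^{n-3}\wedge\partial F\wedge\bar\partial F$, which you then evaluate by the Lefschetz decomposition of $\partial F$ and the Hodge--Riemann bilinear relations. This is sound: the signs (negative for the primitive $(2,1)$-part, positive for the $F\wedge\psi_1$ part) are the essential structural input, and with the normalizations $|\partial F|^2=\tfrac12|dF|^2$, $|(\partial F)_0|^2=\tfrac12|(dF)_0^+|^2$ the bracket does collapse to $\frac{(n-3)!}{2}(|\alpha_F|^2-|dF|^2)$, matching the paper's constant. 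Two small points to tidy up: the paper defines the Gauduchon condition as $\delta\alpha_F=0$, so you should explicitly invoke its equivalence with $\partial\bar\partial F^{n-1}=0$ (standard, and also the $k=n-1$ case of the paper's formula); and your proposed cross-check of the constant against the first integral identity is not by itself a proof, so the Hodge--Riemann computation must actually be carried out --- but as verified it does close. What your approach buys is avoiding the torsion-derivative computation entirely; what it gives up is the general formula for $\sqrt{-1}\partial\bar\partial(F^k)\wedge F^{n-k-1}$ when $\delta\alpha_F\neq 0$, which the paper obtains as a byproduct.
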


The paper is organized as follows. In section 2,  we provide a brief description of the almost Hermitian geometry. An
important  formula between the Riemannian scalar curvature and  the $J$-scalar curvature is obtained.
In section 3, we show the explicit formulas of the two Hermitian scalar curvatures of the Lichnerowicz connection on an almost Hermitian manifold.
In section 4, by using the structure equations of the Lichnerowicz connection $D^0$
and the Chern connection $D^1$, we get the corresponding curvature formulas of Gauduchon's family of canonical Hermitian connections $D^t$.
In section 5, we show some applications of Theorem 1.1.

For convenience, we fix the index range $1\leq i,j,k,...\leq n$, $1\leq A,B,C,...\leq 2n$. We should use the Einstein summation convention, i.e., repeated indices are summed over. The space of smooth tangent vector fields on $M$ is denoted by $\Gamma(TM)$.

\section{J-scalar curvature of an almost Hermitian manifold}

In this section, we provide a brief description of the almost Hermitian geometry. In particular, we review the decomposition
of the covariant derivative of the fundamental 2-form with respect to the Levi-Civita connection.
Then we introduce an important  formula between the Riemannian
scalar curvature and  the $J$-scalar curvature. Some applications are also discussed.

Let $(M,J,h)$ be an almost Hermitian manifold of real dimension $2n$. $J$ is an almost complex structure which is orthogonal with respect to
the Riemannian metric $h=\langle, \rangle$. The Levi-Civita connection of $h$ is denoted by $\nabla$. The Nijenhuis tensor $N$ is defined by
$$N(X,Y)=[X,Y]+J[JX,Y]+J[X,JY]-[JX,JY],$$
where $X,Y\in\Gamma(TM)$. It is well-known that $N\equiv 0$ if and only if $J$ is integrable \cite{NeN}. The associated fundamental 2-form $F$ is defined by
$$F(X,Y)=h(JX,Y),$$
and the volume form is denoted by $dv=\frac{F^n}{n!}$.

Another important differential form of $(M,J,h)$ is the Lee form $\alpha_F$, defined by $\alpha_F=J\delta F$, where $\delta=-\ast d \ast$ is the codifferential
with respect to $h$. In fact, the Lee form $\alpha_F$ is also determined by
\begin{equation}
dF=(dF)_0+\frac{1}{n-1}\alpha_F\wedge F,
\end{equation}
where $(dF)_0$ denotes the primitive part of $dF$.

The covariant derivative of $F$ with respect to the Levi-Civita connection $\nabla$ is
\begin{equation}
(\nabla_X F)(Y,Z)=\frac{1}{2}[dF(X,Y,Z)-dF(X,JY,JZ)-\langle JX,N(Y,Z) \rangle].
\end{equation}
$\nabla F$ also satisfies
\begin{equation}
(\nabla_X F)(Y,Z)=-(\nabla_X F)(Z,Y)=-(\nabla_X F)(JY,JZ),
\end{equation}
where $X,Y,Z\in\Gamma(TM)$.

According to Proposition 1 in \cite{Gau3}, $\nabla F$ has the following decomposition,
\begin{eqnarray}
&&(\nabla_X F)(Y,Z)
=(dF)^-(X,Y,Z)-\frac{1}{2}N(JX,Y,Z)\notag\\
&&~~~~~~~~~~~~~~~~~~~~~~~~~~~~+\frac{1}{2}[(dF)^+(X,Y,Z)-(dF)^+(X,JY,JZ)],
\end{eqnarray}
where $X,Y,Z\in \Gamma(TM)$, $N(JX,Y,Z)=\langle JX,N(Y,Z)\rangle$, $(dF)^+$ is the $(2,1)+(1,2)$-component of $dF$,  $(dF)^-$ is the $(3,0)+(0,3)$-component of $dF$. Furthermore,
\begin{equation}
N=N^0+\mathfrak{b}N,
\end{equation}
\begin{equation}
(dF)^+=(dF)_0^+ +\frac{1}{n-1}\alpha_F\wedge F,
\end{equation}
where $\mathfrak{b}N$ is the skew-symmetric part of $N$, defined by
$\mathfrak{b}N(X,Y,Z)=\frac{1}{3}[N(X,Y,Z)+N(Y,Z,X)+N(Z,X,Y)]$,  $N^0$ denotes the component of the Nijenhuis tensor satisfying $\mathfrak{b}N^0=0$, $(dF)_0^+$ is the primitive part of $(dF)^+$.

For an almost Hermitian manifold, the four components $(dF)^-$, $N^0$, $(dF)_0^+$ and $\alpha_F$ carry important geometric information. Each of the 16 classes
( 4 classes for $n=2$) of almost Hermitian manifolds given by the Gray-Hervella classification \cite{GrH}  corresponds to the vanishing of some subset of
$\{(dF)^-, N^0, (dF)_0^+, \alpha_F\}$. For example, $\mathcal{W}_1$= the class of nearly K\"{a}hler manifolds: $N^0=(dF)_0^+=\alpha_F=0$;
$\mathcal{W}_2$= the class of almost K\"{a}hler manifolds: $(dF)^-=(dF)_0^+=\alpha_F=0$, equivalently, $dF=0$;
$\mathcal{W}_3$= the class of semi-K\"{a}hler (or balanced \cite{Mic}) Hermitian manifolds: $(dF)^-=N^0=\alpha_F=0$;
$\mathcal{W}_3\oplus \mathcal{W}_4$= the class of Hermitian manifolds: $(dF)^-=N^0=0$.

The metric $h$ induces natural inner product, also denoted by $\langle, \rangle$, on $\wedge^k M$ (the bundle of real $k$-forms) and  on
$TM\otimes \wedge^k M$ (the bundle of $TM$-valued $k$-forms). From the decomposition (2.4), then the norm formula of $\nabla F$ is \cite{Gau3}
\begin{align}\label{E:1}
|\nabla F|^2 &= |dF|^2+\frac{1}{4}|N^0|^2-\frac{2}{3}|(dF)^-|^2\notag\\
             &= |(dF)^+|^2+\frac{1}{4}|N^0|^2+\frac{1}{3}|(dF)^-|^2\notag\\
             &=\frac{|\alpha_F|^2}{n-1}+|(dF)_0^+|^2+\frac{1}{4}|N^0|^2+\frac{1}{3}|(dF)^-|^2.
\end{align}
In particular, if $J$ is integrable, then $|\nabla F|^2=|dF|^2$.

The Riemannian curvature tensor $R$ is defined by
\begin{equation}
R(X,Y,Z,W)=\langle\nabla_Z \nabla_W Y-\nabla_W \nabla_Z Y-\nabla_{[Z,W]}Y, X\rangle,
\end{equation}
where $X,Y,Z,W\in \Gamma(TM)$. Meanwhile, the Riemannian curvature tensor $R$ induces an
important curvature operator $\mathfrak{R}:\Lambda^2 TM\longrightarrow \Lambda^2 TM$, defined by
\begin{equation}
h(\mathfrak{R}(X\wedge Y), Z\wedge W)=R(X,Y,Z,W).
\end{equation}
One can also use the metric $h$ to view the curvature operator $\mathfrak{R}$ as an endomorphism of $\wedge^2M$.

Let $\{e_1,e_2,...,e_{2n}\}$ be a local orthonormal frame field of $(M, h)$. The Ricci tensor is
$Ric(X,Y)=R(e_A,X,e_A,Y)$, the corresponding Riemannian scalar curvature is $s=Ric(e_A,e_A)$.

Indeed, on an almost Hermitian manifold $(M,J,h)$, there is a $J$-twisted version of the Ricci tensor, called the $J$-Ricci tensor from now on (in some literatures, also called the $\ast$-Ricci tensor) \cite{GrH,TrV,dRS}. The $J$-Ricci tensor, denoted by $Ric_{J}$, is defined by
$$Ric_{J}(X,Y)=R(e_A,X,Je_A,JY).$$
The corresponding $J$-scalar curvature, denoted by $s_J$, is given by $s_J=Ric_J (e_A,e_A)$. In general,
$Ric_J\neq Ric$, $Ric_J (X,Y)\neq Ric_J(Y,X)$, but $Ric_J (X,Y)=Ric_J(JY,JX)$. Thus, we can introduce the following 2-form $\rho_J$, called the
$J$-Ricci form,
\begin{equation}
\rho_J(X,Y)=-Ric_J(X,JY).
\end{equation}
By direct calculations, we have
\begin{equation}
\rho_J=\mathfrak{R}(F),~~~~~s_J=2\langle\mathfrak{R}(F),F\rangle.
\end{equation}

Let $\Delta_d=d\delta+\delta d$ be the Hodge Laplacian operator associated to the metric $h$. By using the Bochner-Weitzenb\"{o}ck formula for the fundamental 2-form
$F$, we obtain \cite{Gau2}
\begin{eqnarray}
&&\Delta_d F
=\nabla^\ast\nabla F+\frac{2(n-1)}{n(2n-1)}s\cdot F\notag\\
&&~~~~~~~~~~~~~-2 W(F)+\frac{n-2}{n-1}[Ric_0(J\cdot,\cdot)-Ric_0(\cdot,J\cdot)],
\end{eqnarray}
where $\nabla^\ast$ denotes the adjoint of the Levi-Civita connection $\nabla$ with respect to $h$, $Ric_0$ is the
trace-free part of the Ricci tensor $Ric$, and $W$ is the Weyl curvature operator.

By contracting (2.12) by $F$, then
\begin{equation}
\langle\Delta_d F, F\rangle-\langle\nabla^\ast\nabla F,F\rangle=\frac{2(n-1)}{2n-1}s-2\langle W(F),F\rangle.
\end{equation}

Since
$$\langle\Delta_d  F,F\rangle=|dF|^2+|\alpha_F|^2+2\delta\alpha_F$$
and $\langle\nabla^\ast\nabla F, F\rangle=|\nabla F|^2$, it follows that
\begin{equation}
\frac{2(n-1)}{2n-1}s-2\langle W(F),F\rangle=|dF|^2+|\alpha_F|^2+2\delta\alpha_F-|\nabla F|^2.
\end{equation}
In particular, if $(M,J,h)$ is conformally flat, i.e., $W=0$, then
\begin{equation}
s=\frac{2n-1}{2(n-1)}[|dF|^2+|\alpha_F|^2+2\delta\alpha_F-|\nabla F|^2].
\end{equation}

In general, from the decomposition of the Riemannian curvature tensor \cite{Bes,dRS}, we have
\begin{equation}
\langle W(F),F\rangle=\frac{1}{2(2n-1)}[(2n-1)s_J-s].
\end{equation}
Combing (2.14), (2.16) and (2.7), we obtain the following difference between the Riemannian scalar curvature $s$ and $J$-scalar curvature $s_J$,
\begin{align}\label{E:1}
s-s_J &=|dF|^2-|\nabla F|^2+|\alpha_F|^2+2\delta\alpha_F\notag\\
             &=\frac{2}{3}|(dF)^-|^2-\frac{1}{4}|N^0|^2+|\alpha_F|^2+2\delta\alpha_F.
\end{align}
We remark that from (2.7) and (2.17), we can prove all the linear relations in Theorem 7.2 in \cite{GrH}. Using the formula (2.17), we have

\begin{thm} Let $(M,J,h)$ be a compact almost Hermitian manifold of real dimension $2n$.
If $(M,J,h)\in\mathcal{W}_1\oplus\mathcal{W}_3\oplus \mathcal{W}_4$, then
$$\int_M(s-s_J)dv\geq 0.$$
The equality holds if and only if $(M,J,h)$ is a balanced Hermitian manifold.
\end{thm}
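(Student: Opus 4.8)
The plan is to reduce the statement to the pointwise identity (2.17) and then integrate, using compactness to discard the divergence term.

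First I would translate the hypothesis into the vanishing of a single component of $\nabla F$. Reading off the Gray--Hervella dictionary recorded in the excerpt (nearly K\"ahler $\mathcal{W}_1$: $N^0=(dF)_0^+=\alpha_F=0$; almost K\"ahler $\mathcal{W}_2$: $(dF)^-=(dF)_0^+=\alpha_F=0$; balanced $\mathcal{W}_3$: $(dF)^-=N^0=\alpha_F=0$), one sees that the four summands $\mathcal{W}_1,\mathcal{W}_2,\mathcal{W}_3,\mathcal{W}_4$ correspond respectively to the four components $(dF)^-$, $N^0$, $(dF)_0^+$, $\alpha_F$ of $\nabla F$. Hence membership in $\mathcal{W}_1\oplus\mathcal{W}_3\oplus\mathcal{W}_4$ is precisely the condition that the $\mathcal{W}_2$-part vanishes, i.e. $N^0=0$. (This is consistent with the sample classes listed in the introduction, e.g. $\mathcal{W}_3\oplus\mathcal{W}_4$: $(dF)^-=N^0=0$.)

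Next I would substitute $N^0=0$ into (2.17), obtaining the pointwise identity
\[
s-s_J=\tfrac{2}{3}|(dF)^-|^2+|\alpha_F|^2+2\delta\alpha_F.
\]
Integrating over the compact manifold $M$ (without boundary) and using $\int_M \delta\alpha_F\,dv=0$ --- which holds because the integral of the codifferential of a $1$-form over a closed manifold vanishes, being the $L^2$-pairing of $\alpha_F$ with $d1=0$ --- the divergence term drops out, leaving
\[
\int_M (s-s_J)\,dv=\int_M\Bigl[\tfrac{2}{3}|(dF)^-|^2+|\alpha_F|^2\Bigr]dv\ge 0,
\]
which is the asserted inequality.

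Finally, for the equality case: the right-hand integrand is a sum of squares, so $\int_M(s-s_J)\,dv=0$ forces $(dF)^-\equiv 0$ and $\alpha_F\equiv 0$ pointwise. Together with the standing hypothesis $N^0=0$, this yields $(dF)^-=N^0=\alpha_F=0$, which is exactly the defining condition of the class $\mathcal{W}_3$; that is, $(M,J,h)$ is a balanced Hermitian manifold, and conversely any such metric satisfies these three vanishing conditions and hence realizes equality. I do not anticipate a genuine obstacle: once (2.17) is granted the argument is a single integration. The only points requiring care are the correct identification of $\mathcal{W}_1\oplus\mathcal{W}_3\oplus\mathcal{W}_4$ with $\{N^0=0\}$, and verifying that the equality locus coincides exactly with the balanced condition rather than some larger subclass.
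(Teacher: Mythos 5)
Your argument is correct and coincides with the paper's own proof: both reduce the hypothesis to $N^0=0$, substitute into (2.17), integrate away the $\delta\alpha_F$ term on the closed manifold, and read off the equality case from the vanishing of $(dF)^-$ and $\alpha_F$, which together with $N^0=0$ gives the class $\mathcal{W}_3$ of balanced Hermitian manifolds. No gaps.
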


\begin{proof} From the decomposition of $\nabla F$, if $(M,J,h)\in\mathcal{W}_1\oplus\mathcal{W}_3\oplus \mathcal{W}_4$,
then the component $N^0=0$. From (2.17), we have
$$s-s_J=\frac{2}{3}|(dF)^-|^2+|\alpha_F|^2+2\delta\alpha_F,$$
which implies
$$\int_M(s-s_J)dv=\frac{2}{3}\int_M|(dF)^-|^2dv+\int_M|\alpha_F|^2dv\geq 0.$$
It is obvious that the above equality holds if and only if $(dF)^-=\alpha_F=0$. Together with $N^0=0$, the result follows.

\end{proof}

\begin{rem}
Recently, B. Yang and F. Y. Zheng \cite{YZ1} have proved that for a compact Hermitian manifold $(M,J,h)$ with Riemannian curvature tensor $R$
satisfying the Gray-K\"{a}hler-like condition \cite{Gra} (i.e., $R(X,Y,Z,W)=R(X,Y,JZ,JW)$), then $(M,J,h)$ must be a balanced Hermitian manifold. This result
is included in our Theorem 2.1. Because the Gray-K\"{a}hler-like condition implies $s-s_J=0$, and moreover, our condition $(M,J,h)\in\mathcal{W}_1\oplus\mathcal{W}_3\oplus \mathcal{W}_4$, not necessary a Hermitian manifold.
\end{rem}

By using the same method as in Theorem 2.1, we obtain

\begin{thm} Let $(M,J,h)$ be a compact almost Hermitian manifold of real dimension $2n$.
If $(M,J,h)\in\mathcal{W}_2\oplus\mathcal{W}_3$, then
$$\int_M(s-s_J)dv\leq0.$$
The equality holds if and only if $(M,J,h)$ is a balanced Hermitian manifold.
\end{thm}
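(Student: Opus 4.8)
The plan is to mirror the argument of Theorem 2.1, reading off the vanishing components of $\nabla F$ from the Gray--Hervella dictionary and then feeding them into the scalar curvature difference formula (2.17). First I would identify which of the four invariants $(dF)^-$, $N^0$, $(dF)_0^+$, $\alpha_F$ are forced to vanish in the class $\mathcal{W}_2\oplus\mathcal{W}_3$. Using the defining conditions recalled in Section 2, namely $\mathcal{W}_2$: $(dF)^-=(dF)_0^+=\alpha_F=0$ and $\mathcal{W}_3$: $(dF)^-=N^0=\alpha_F=0$, the direct sum $\mathcal{W}_2\oplus\mathcal{W}_3$ is cut out by the conditions common to both summands, so that $(dF)^-=0$ and $\alpha_F=0$, while the $\mathcal{W}_2$-piece $N^0$ and the $\mathcal{W}_3$-piece $(dF)_0^+$ are left unconstrained.

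With these vanishings in hand I would substitute into (2.17). Because $\alpha_F\equiv 0$ we also have $\delta\alpha_F=0$, so (2.17) collapses to the pointwise identity
$$s-s_J=-\frac{1}{4}|N^0|^2.$$
Integrating over the compact manifold $M$ against $dv$ then gives
$$\int_M(s-s_J)\,dv=-\frac{1}{4}\int_M|N^0|^2\,dv\leq 0,$$
which is exactly the desired inequality. The surviving quadratic term now enters with a negative sign, and this is precisely what reverses the direction of the inequality compared with Theorem 2.1.

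For the equality case, the integrand is a nonpositive multiple of a squared norm, so equality holds if and only if $N^0\equiv 0$. Combined with the ambient vanishings $(dF)^-=\alpha_F=0$ valid throughout $\mathcal{W}_2\oplus\mathcal{W}_3$, this yields $(dF)^-=N^0=\alpha_F=0$, which is precisely the defining condition for the class $\mathcal{W}_3$ of balanced (semi-K\"{a}hler) Hermitian manifolds; note that $(dF)_0^+$ may still be nonzero, so the limiting metric need not be K\"{a}hler. I expect no genuine obstacle beyond correctly pinning down the two vanishing invariants of $\mathcal{W}_2\oplus\mathcal{W}_3$: once formula (2.17) is available, the remainder is a one-line integration strictly parallel to the proof of Theorem 2.1.
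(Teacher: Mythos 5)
Your proposal is correct and follows essentially the same route as the paper: identify the vanishing components $(dF)^-=\alpha_F=0$ for the class $\mathcal{W}_2\oplus\mathcal{W}_3$, substitute into (2.17) to get $s-s_J=-\frac{1}{4}|N^0|^2$ pointwise, integrate, and read off the equality case as $N^0=0$, i.e.\ the class $\mathcal{W}_3$ of balanced Hermitian manifolds. The only cosmetic difference is that the paper additionally notes $|N^0|^2=|N|^2$ here (since $(dF)^-=0$ forces $\mathfrak{b}N=0$), which makes the integrability of $J$ in the equality case explicit.
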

\begin{proof} From the decomposition of $\nabla F$, if $(M,J,h)\in\mathcal{W}_2\oplus\mathcal{W}_3$,
then the component $(dF)^-=\alpha_F=0$. From (2.17), we have
$$s-s_J=-\frac{1}{4}|N^0|^2=-\frac{1}{4}|N|^2,$$
which implies
$$\int_M(s-s_J)dv=-\frac{1}{4}\int_M|N|^2dv\leq 0.$$
Thus, the theorem follows.
\end{proof}

\begin{rem} We should remark some interesting observations. From Theorem 2.1, it follows that
compact real hyperbolic manifold (of dimension $2n\geq 4$) does not admit a compatible $\mathcal{W}_1\oplus\mathcal{W}_3\oplus \mathcal{W}_4$-structure, in particular, does not admit an orthogonal complex structure, since real hyperbolic manifold is conformally flat with $s<0$ \cite{HeL}\cite{Gau2}.
Theorem 2.3 implies that the six sphere $S^6$ with the standard round metric does not admit a compatible $\mathcal{W}_2\oplus\mathcal{W}_3$-structure, since $S^6$ with the standard round metric is conformally flat with $s>0$ \cite{HeL}. By the same reasons, the Hopf manifold $S^1\times S^{2n-1}$ with the standard induced metric does not admit a compatible $\mathcal{W}_2\oplus\mathcal{W}_3$-structure.

\end{rem}

\section{Scalar curvatures of the Lichnerowicz connection}

In this section, we show the explicit formulas of the two Hermitian scalar curvatures of the Lichnerowicz connection on an almost Hermitian manifold.
Preparing for the next section, the corresponding structure equations of the Lichnerowicz connection are also presented.

Let $(M^{2n},J,h)$ be an almost Hermitian manifold with Levi-Civita connection $\nabla$. The Lichnerowicz connection $D^0$ is defined by
\begin{equation}
D_X^0 Y=\nabla_X Y-\frac{1}{2}J(\nabla_X J)Y,
\end{equation}
and the corresponding curvature tensor $K^0$ is
\begin{equation}
K^0(X,Y,Z,W)=\langle D_Z^0 D_W^0 Y-D_W^0 D_Z^0 Y-D_{[Z,W]}^0 Y, X\rangle,
\end{equation}
where $X,Y,Z,W\in \Gamma(TM)$.

A straightforward computation shows the following relation of the curvature tensors $K^0$ and $R$ \cite{GBNV,DGM},
\begin{eqnarray}
&&K^0(X,Y,Z,W)
=\frac{1}{2}[R(X,Y,Z,W)+R(JX,JY,Z,W)]\\
&&~~~~~~~~~~~~~~~~~~~~~~~~~~~~~~+\frac{1}{4}[\langle(\nabla_Z J)X, (\nabla_W J)Y\rangle-\langle(\nabla_W J)X, (\nabla_Z J)Y\rangle].\notag
\end{eqnarray}

From the above formula, as in the Hermitian case \cite{Gau1,LY}, we can define various Ricci forms and Hermitian scalar curvatures by some usful contractions of the curvature tensor $K^0$. Here, we only consider two Hermitian scalar curvatures, denoted by $s_1(0)$ and $s_2(0)$, of the Lichnerowicz connection.

With respect to the almost Hermitian structure $(h, J)$, we shall always choose a local $J$-adapted orthonormal frame field
$\{e_i, e_{n+1}=Je_i\}_{i=1,2,...,n}$. Then the corresponding unitary frame field is
$\{u_i=\frac{1}{\sqrt{2}}(e_i-\sqrt{-1}e_{n+i})\}_{i=1,2,...,n}$. Set $u_{\bar{i}}=\overline{u_i}$. The first Hermitian scalar curvature $s_1(0)$ and
the second Hermitian scalar curvature $s_2(0)$ are defined by
$$s_1(0)=K^0(u_{\bar{i}},u_i,u_j,u_{\bar{j}}),~~~~s_2(0)=K^0(u_{\bar{i}},u_j,u_i,u_{\bar{j}}).$$

From the relation (3.3) and (2.11), we have
\begin{align}\label{E:1}
s_1(0) &=R(u_{\bar{i}},u_i,u_j,u_{\bar{j}})+\frac{1}{4}[\langle(\nabla_{u_j}J)u_{\bar{i}},(\nabla_{u_{\bar{j}}}J)u_i\rangle
-\langle(\nabla_{u_{\bar{j}}}J)u_{\bar{i}},(\nabla_{u_j}J)u_i\rangle]\notag\\
             &=\frac{1}{2}s_J-\frac{1}{8}\langle(\nabla_{e_B}J)(Je_A),(\nabla_{Je_B}J)e_A\rangle.
\end{align}
According to the decomposition (2.4) of $\nabla F$, it follows that
\begin{align}\label{E:1}
\langle(\nabla_{e_B}J)(Je_A),(\nabla_{Je_B}J)e_A\rangle &=-\langle(\nabla_{Je_B}J)(Je_A),(\nabla_{e_B}J)e_A\rangle \notag\\
             &=-2|(dF)^+|^2+\frac{2}{3}|(dF)^-|^2+\frac{1}{2}|N^0|^2.
\end{align}
Then, from (3.4) and (3.5), we get
\begin{equation}
s_1(0)=\frac{1}{2}s_J+\frac{1}{4}|(dF)^+|^2-\frac{1}{12}|(dF)^-|^2-\frac{1}{16}|N^0|^2.
\end{equation}

For $s_2(0)$, by the similar method, we have
\begin{equation}
s_2(0)=R(u_{\bar{i}},u_j,u_i,u_{\bar{j}})+
\frac{1}{4}[\langle(\nabla_{u_i}J)u_{\bar{i}},(\nabla_{u_{\bar{j}}}J)u_j\rangle-\langle(\nabla_{u_{\bar{j}}}J)u_{\bar{i}},(\nabla_{u_i}J)u_j\rangle].
\end{equation}
Using the first Bianchi identity, it follows
\begin{align}\label{E:1}
R(u_{\bar{i}},u_j,u_i,u_{\bar{j}}) &=R(u_{\bar{i}},u_i,u_j,u_{\bar{j}})+R(u_{\bar{i}},u_{\bar{j}},u_i,u_j) \notag\\
             &=\frac{1}{2}R(u_{\bar{i}},u_i,u_j,u_{\bar{j}})+\frac{1}{4}[4R(u_{\bar{i}},u_{\bar{j}},u_i,u_j)+2R(u_{\bar{i}},u_i,u_j,u_{\bar{j}})]\notag\\
             &=\frac{1}{4}s_J+\frac{1}{4}s.
\end{align}
Note that
\begin{equation}
\langle(\nabla_{u_i}J)u_{\bar{i}},(\nabla_{u_{\bar{j}}}J)u_j\rangle=\frac{1}{2}\langle(\nabla_{e_A}J)e_A,(\nabla_{e_B}J)e_B\rangle=\frac{1}{2}|\alpha_F|^2,
\end{equation}
and
\begin{align}\label{E:1}
\langle(\nabla_{u_{\bar{j}}}J)u_{\bar{i}},(\nabla_{u_i}J)u_j\rangle&=\frac{1}{2}\langle(\nabla_{e_A}J)e_B,(\nabla_{e_B}J)e_A\rangle\notag\\
&=\frac{1}{2}(|\nabla F|^2-|dF|^2).
\end{align}
Then, from (3.7), (3.8), (3.9) and (3.10), we obtain
\begin{equation}
s_2(0)=\frac{1}{4}(s_J+s)+\frac{1}{8}[|\alpha_F|^2-(|\nabla F|^2-|dF|^2)].
\end{equation}

Combing (2.7) and (2.17), the formulas (3.6) and (3.11) yield the following theorem.

\begin{thm}
Let $(M,J,h)$ be an almost Hermitian manifold of real dimension $2n$.
Then
\begin{equation}
s_1(0)=\frac{s}{2}-\frac{5}{12}|(dF)^-|^2+\frac{1}{16}|N^0|^2+\frac{1}{4}|(dF)_0^+|^2+\frac{3-2n}{4(n-1)}
|\alpha_F|^2-\delta \alpha_F,
\end{equation}
and
\begin{equation}
s_2(0)=\frac{s}{2}-\frac{1}{12}|(dF)^-|^2+\frac{1}{32}|N^0|^2
-\frac{1}{8}|\alpha_F|^2-\frac{1}{2}\delta \alpha_F.
\end{equation}

\end{thm}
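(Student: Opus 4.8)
The plan is to treat the two identities (3.6) and (3.11) as the starting point, since they already express $s_1(0)$ and $s_2(0)$ in terms of the $J$-scalar curvature $s_J$ together with norms of components of $\nabla F$. What remains is a substitution and coefficient-collecting exercise: I would eliminate $s_J$ in favor of the Riemannian scalar curvature $s$ using the difference formula (2.17), and rewrite the auxiliary norms $|(dF)^+|^2$ and $|\nabla F|^2-|dF|^2$ by means of the chain of equalities in (2.7). Throughout, I would use (2.17) in the solved form $s_J=s-\frac{2}{3}|(dF)^-|^2+\frac{1}{4}|N^0|^2-|\alpha_F|^2-2\delta\alpha_F$.

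For $s_1(0)$, I would first extract from (2.7) the relation $|(dF)^+|^2=|(dF)_0^+|^2+\frac{1}{n-1}|\alpha_F|^2$ by comparing the second and third lines (this reflects the orthogonal decomposition (2.6) of $(dF)^+$ into its primitive part and the $\frac{1}{n-1}\alpha_F\wedge F$ term, together with the identification $\frac{1}{(n-1)^2}|\alpha_F\wedge F|^2=\frac{1}{n-1}|\alpha_F|^2$). Substituting this into (3.6) and then replacing $\frac{1}{2}s_J$ by $\frac{1}{2}s-\frac{1}{3}|(dF)^-|^2+\frac{1}{8}|N^0|^2-\frac{1}{2}|\alpha_F|^2-\delta\alpha_F$, I would collect the coefficient of each invariant. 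The $|(dF)^-|^2$ coefficient becomes $-\frac{1}{3}-\frac{1}{12}=-\frac{5}{12}$, the $|N^0|^2$ coefficient $\frac{1}{8}-\frac{1}{16}=\frac{1}{16}$, the $|(dF)_0^+|^2$ coefficient $\frac{1}{4}$, and the $|\alpha_F|^2$ coefficient $-\frac{1}{2}+\frac{1}{4(n-1)}=\frac{3-2n}{4(n-1)}$, together with $-\delta\alpha_F$, which is exactly (3.12).

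For $s_2(0)$, I would use the first line of (2.7) in the form $|\nabla F|^2-|dF|^2=\frac{1}{4}|N^0|^2-\frac{2}{3}|(dF)^-|^2$, insert it into (3.11), and replace $\frac{1}{4}(s_J+s)$ by $\frac{1}{2}s-\frac{1}{6}|(dF)^-|^2+\frac{1}{16}|N^0|^2-\frac{1}{4}|\alpha_F|^2-\frac{1}{2}\delta\alpha_F$ using the solved (2.17). Collecting terms then gives $-\frac{1}{6}+\frac{1}{12}=-\frac{1}{12}$ for $|(dF)^-|^2$, $\frac{1}{16}-\frac{1}{32}=\frac{1}{32}$ for $|N^0|^2$, and $-\frac{1}{4}+\frac{1}{8}=-\frac{1}{8}$ for $|\alpha_F|^2$, together with $-\frac{1}{2}\delta\alpha_F$, yielding (3.13). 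The argument involves no genuine obstacle: the only step requiring attention is the correct extraction of $|(dF)^+|^2$ from (2.7), after which the proof reduces entirely to careful bookkeeping of the rational coefficients.
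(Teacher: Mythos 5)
Your proposal is correct and follows exactly the paper's own route: the paper proves Theorem 3.1 simply by combining (3.6) and (3.11) with the norm identities (2.7) and the difference formula (2.17), which is precisely the substitution and coefficient bookkeeping you carry out, and all of your collected coefficients check out.
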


\begin{rem}
The integral formula of (3.12) is  given in \cite{BH2}. This formula is very useful in characterizing different types of almost Hermitian structures, in
particular, for these with vanishing first Chern class \cite{SeV,BH2}.
\end{rem}

Note that for other canonical Hermitian connections  mentioned in the introduction, the corresponding curvature tensor is extremely complicated,
not so simple as in the formula (3.3).  In the next section, we should exploit the moving frame method, which turns out
 to be very effective in our study of curvatures in almost Hermitian geometry. At the end of this section,
 we also show the structure equations of the Lichnerowicz connection on an almost Hermitian manifold, preparing for the next section.

Let $\{e_i, e_{n+1}=Je_i\}_{i=1,2,...,n}$ be a local $J$-adapted orthonormal frame field on $(M,J,h)$, its dual coframe field is denoted by
$\{\omega^1,\omega^2,...,\omega^{2n}\}$. Let $\omega=(\omega_B^A)$ be the connection form matrix of the Levi-Civita connection. The corresponding curvature
form matrix is denoted by $\Omega=(\Omega_B^A)$. Thus, the structure equations of the Levi-Civita connection are
\begin{eqnarray}
&&d\omega^A=-\omega_B^A\wedge\omega^B,\\
&&d\omega_B^A=-\omega_C^A\wedge\omega_B^C+\Omega_B^A,
\end{eqnarray}
where $\omega_B^A+\omega_A^B=0$, $\Omega_B^A=\frac{1}{2}R_{ABCD}\omega^C\wedge\omega^D$,
$R_{ABCD}=R(e_A,e_B,e_C,e_D)$.

Set $J_0=\left(\begin{array}{cc}0 &-I_n\\I_n&0
\end{array}\right)$, $I_n$ is the $n\times n$ identity matrix. Then we have the following decompositions of $\omega$ and $\Omega$,
$$\omega=\frac{1}{2}(\omega-J_0 \omega J_0)+\frac{1}{2}(\omega+J_0 \omega J_0),$$
$$\Omega=\frac{1}{2}(\Omega-J_0 \Omega J_0)+\frac{1}{2}(\Omega+J_0 \Omega J_0).$$
In fact, $\frac{1}{2}(\omega-J_0 \omega J_0)$ is the connection form matrix of the Lichnerowicz connection.
The unitary coframe field is denoted by $\theta^i=\frac{1}{\sqrt{2}}(\omega^i+\sqrt{-1}\omega^{n+i})$, and set $$\varphi_j^i=\frac{1}{2}(\omega_j^i+\omega_{n+j}^{n+i})+\frac{\sqrt{-1}}{2}(\omega_j^{n+i}-\omega_{n+j}^i),$$
$$\mu_j^i=\frac{1}{2}(\omega_j^i-\omega_{n+j}^{n+i})+\frac{\sqrt{-1}}{2}(\omega_j^{n+i}+\omega_{n+j}^i).$$
Obviously, $\varphi_j^i+\overline{\varphi_i^j}=0$ and  $\mu_j^i+\mu_i^j=0$.
Then the structure equations of the Lichnerowicz connection are
\begin{eqnarray}
&&d\theta^i=-\varphi_j^i\wedge\theta^j+\tau^i,\\
&&d\varphi_j^i=-\varphi_k^i\wedge\varphi_j^k+\Phi_j^i,
\end{eqnarray}
where $\tau^i=-\mu_j^i\wedge \overline{\theta^j}$ is the torsion form, and $\Phi_j^i=-\mu_k^i\wedge\overline{\mu_j^k}+\frac{1}{2}(\Omega_j^i+\Omega_{n+j}^{n+i})+\frac{\sqrt{-1}}{2}(\Omega_j^{n+i}-\Omega_{n+j}^i)$
is the curvature form.

Hence, using the skew-symmetric property of $\Omega$ and $\mu$, the first Chern form associated to the Lichnerowicz connection, denoted by $\rho_1(0)$, is
\begin{equation}
\rho_1(0)=\sqrt{-1}\mathrm{tr}(\Phi)=\sqrt{-1}\mu_j^i\wedge\overline{\mu_j^i}-\Omega_j^{n+j}.
\end{equation}
In fact, the two forms $\sqrt{-1}\mu_j^i\wedge\overline{\mu_j^i}$ and $-\Omega_j^{n+j}$ are globally defined. In a more familiar version, these two forms can
be rewritten as follows \cite{SeV},
$$-\Omega_j^{n+j}=\mathfrak{R}(F)=\rho_J,$$
$$\sqrt{-1}\mu_j^i\wedge\overline{\mu_j^i}(X,Y)=\frac{1}{4}\langle J(\nabla_X J)e_A,(\nabla_Y J)e_A\rangle,$$
where $X,Y\in \Gamma(TM).$

In particular, if $(M,J,h)$ is a Hermitian manifold, equivalently, $\nabla_{JX}J=J\nabla_X J$, then $\sqrt{-1}\mu_j^i\wedge\overline{\mu_j^i}$ is a
non-negative $(1,1)$-form.

As a classical example, we consider the six sphere $S^6$. For the standard round metric,
the associated fundamental 2-form $F$ of an orthogonal almost complex structure $J$ satisfies
$\mathfrak{R}(F)=F$. If $J$ is integrable, thus from (3.18), $\rho_1(0)$ is a closed positive  2-form. It is a contradiction since the second betti number
 of $S^6$ is zero. Moreover, we can get the following well-known result.

\begin{prop} \cite{LeB,SeV,BH1,PT,Tang,LY}
The six sphere $S^6$ does not admit an orthogonal complex structure compatible with any metric in a small neighborhood of the standard round metric.
\end{prop}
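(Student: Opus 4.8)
The plan is to argue by contradiction using the first Chern form of the Lichnerowicz connection and the vanishing of $H^2(S^6;\mathbb{R})$. Suppose that for some metric $h$ lying in a small $C^2$-neighborhood of the standard round metric $h_0$ there exists a compatible orthogonal complex structure $J$; in particular $J$ is integrable. Consider the closed $2$-form $\rho_1(0)=\sqrt{-1}\,\mathrm{tr}(\Phi)$. As a first Chern form of a Hermitian connection on $(TM,J)$, it represents (a constant multiple of) $c_1(S^6,J)$. Since $H^2(S^6;\mathbb{R})=0$, this class vanishes, so $\rho_1(0)$ is exact; hence by Stokes $\int_{S^6}\rho_1(0)^3=0$. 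The strategy is to contradict this by showing that $\rho_1(0)^3$ is a strictly positive volume form pointwise.

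By (3.18) we have $\rho_1(0)=\sqrt{-1}\,\mu_j^i\wedge\overline{\mu_j^i}+\rho_J$ with $\rho_J=\mathfrak{R}(F)$. Since $J$ is integrable, the term $\sqrt{-1}\,\mu_j^i\wedge\overline{\mu_j^i}$ is a non-negative $(1,1)$-form. For the round metric, constant sectional curvature $1$ gives $\mathfrak{R}_{h_0}=\mathrm{id}$, so $\rho_J=F_0>0$; this is exactly the clean case recorded in the Remark before the statement. For $h$ in a sufficiently small $C^2$-neighborhood of $h_0$, the curvature operator $\mathfrak{R}_h$ is $C^0$-close to $\mathrm{id}$, uniformly over the compact set of possible fundamental forms $F$ (all of fixed norm). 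Thus $\rho_J=\mathfrak{R}_h(F)$ is $C^0$-close to the positive form $F$, and its $(1,1)$-part is positive definite. Adding the non-negative term, the $(1,1)$-part $P:=\bigl(\rho_1(0)\bigr)^{1,1}=\sqrt{-1}\,\mu_j^i\wedge\overline{\mu_j^i}+(\rho_J)^{1,1}$ is a positive $(1,1)$-form.

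The decisive point, and where the difficulty lies, is that for $h\neq h_0$ the form $\rho_1(0)$ need not be of pure type $(1,1)$: the error $\rho_J-F$ carries a $(2,0)+(0,2)$ component, so we write $\rho_1(0)=P+\beta$ with $\beta=\beta^{2,0}+\beta^{0,2}$ anti-invariant. Here I would use the special feature of real dimension six: on the complex $3$-fold $S^6$ all forms of holomorphic or anti-holomorphic degree exceeding $3$ vanish, so $P^2\wedge\beta$ (of type $(4,2)+(2,4)$) and $\beta^3$ collapse, leaving $\rho_1(0)^3=P^3+6\,P\wedge\beta^{2,0}\wedge\beta^{0,2}$. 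A pointwise computation in a unitary coframe diagonalizing $P=\sqrt{-1}\sum_m p_m\,dz^m\wedge d\bar z^m$ with $p_m>0$ shows that, writing $\beta^{2,0}=\sum_{i<j}b_{ij}\,dz^i\wedge dz^j$, one gets $P\wedge\beta^{2,0}\wedge\beta^{0,2}=\bigl(p_3|b_{12}|^2+p_2|b_{13}|^2+p_1|b_{23}|^2\bigr)\,dv\ge 0$. Consequently $\rho_1(0)^3\ge P^3>0$ pointwise.

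Integrating gives $\int_{S^6}\rho_1(0)^3>0$, which contradicts exactness, proving the Proposition. I expect the main obstacle to be precisely this type analysis of $\rho_1(0)^3$: unlike the round case, $\rho_1(0)$ is no longer purely $(1,1)$, and a naive expansion of the cube produces cross terms with the potentially large factor $\sqrt{-1}\,\mu_j^i\wedge\overline{\mu_j^i}$. What rescues the argument is the favorable sign of $P\wedge\beta^{2,0}\wedge\beta^{0,2}$, special to dimension six, which removes any need for a delicate estimate on that large non-negative term and reduces everything to the positivity of $P$ secured by closeness to the round metric.
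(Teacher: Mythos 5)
Your argument is correct and is essentially the paper's own: the paper proves this by observing that, via (3.18), $\rho_1(0)=\sqrt{-1}\,\mu_j^i\wedge\overline{\mu_j^i}+\mathfrak{R}(F)$ is a closed positive $2$-form for the round metric (where $\mathfrak{R}(F)=F$ and the first term is a non-negative $(1,1)$-form by integrability), contradicting $b_2(S^6)=0$, and then asserts the perturbed statement. The paper writes out only the round-metric case, so your type decomposition $\rho_1(0)^3=P^3+6\,P\wedge\beta^{2,0}\wedge\beta^{0,2}\ge P^3>0$ is a correct and welcome filling-in of the one detail it glosses over, namely that $\mathfrak{R}(F)$ need no longer be of pure type $(1,1)$ for a nearby non-round metric.
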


\section{Curvatures of the canonical Hermitian connections}

In this section, on an almost Hermitian manifold, by using the structure equations of the Lichnerowicz connection $D^0$
and the Chern connection $D^1$, we obtain the corresponding curvature formulas of Gauduchon's family of canonical Hermitian connections $D^t$.
In particular, we show the explicit formulas of two Hermitian scalar curvatures in terms of Riemannian scalar curvature, norms of decompositions of covariant derivative of the fundamental 2-form with respect to the Levi-Civita connection, and the codifferential of the Lee form.

For an almost Hermitian manifold $(M,J,h)$, we consider local unitary frame field $u_i$, and its dual coframe field $\theta^i$ as in section 3. Locally,
let $\psi=(\psi_j^i)$ be the matrix of the Chern connection form. Then the structure equations of the Chern connection are given by
\begin{eqnarray}
&&d\theta^i=-\psi_j^i\wedge\theta^j+T^i,\\
&&d\psi_j^i=-\psi_k^i\wedge\psi_j^k+\Psi_j^i,
\end{eqnarray}
where $T^i$ is the torsion form, $\Psi_j^i$ is the curvature form. For the $(1,1)$-component of the torsion form $T^i$ is vanishing, then $T^i$ can be written as $$T^i=\frac{1}{2}T_{jk}^i \theta^j\wedge\theta^k+\frac{1}{2}T_{\bar{j}\bar{k}}^i \overline{\theta^j}\wedge\overline{\theta^k},$$
where $T_{jk}^i+T_{kj}^i=0$, $T_{\bar{j}\bar{k}}^i+T_{\bar{k}\bar{j}}^i=0$.

Next, we  rewrite the Nijenhuis tensor $N$ and components of $dF$ in terms of the torsion of the Chern connection.

Set $N_{\bar{i}\bar{j}}^k=\langle N(u_{\bar{i}},u_{\bar{j}}),u_{\bar{k}}\rangle$. From the definition of the Nijenhuis tensor $N$, we have
\begin{equation}
N=\frac{1}{2}N_{\bar{i}\bar{j}}^k u_k\otimes \overline{\theta^i}\wedge\overline{\theta^j}+
\frac{1}{2}\overline{N_{\bar{i}\bar{j}}^k} u_{\bar{k}}\otimes \theta^i\wedge\theta^j.
\end{equation}
The torsion tensor $T^{D^1}$ of the Chern connection is defined by
$$T^{D^1}(X,Y)=D_X^1 Y-D_Y^1 X-[X,Y],$$
then
\begin{equation}
T^{D^1}=u_i\otimes T^i+ u_{\bar{i}}\otimes\overline{ T^i}.
\end{equation}
Moreover, since
$$N(X,Y)=-2T^{D^1}(X,Y)-2JT^{D^1}(JX,Y),$$
then (4.3) and (4.4) yield
\begin{equation}
N_{\bar{i}\bar{j}}^k=2\langle-T^{D^1}(u_{\bar{i}},u_{\bar{j}})+\sqrt{-1}JT^{D^1}(u_{\bar{i}},u_{\bar{j}}),u_{\bar{k}}\rangle=-4T_{\bar{i}\bar{j}}^k
\end{equation}

From the structure equation (4.1), the exterior differential of the fundamental 2-form $F$ is given by
$$dF=\sqrt{-1}(T^i\wedge \overline{\theta^i}-\theta^i\wedge\overline{T^i}).$$
Then we have
\begin{equation}
\alpha_F=J\delta F=T_{ji}^i\theta^j+\overline{T_{ji}^i}\overline{\theta^j},
\end{equation}
\begin{equation}
(dF)^+=\frac{\sqrt{-1}}{2}(T_{jk}^i\theta^j\wedge\theta^k\wedge\overline{\theta^i}
-\overline{T_{jk}^i}\overline{\theta^j}\wedge\overline{\theta^k}\wedge\theta^i),
\end{equation}
\begin{equation}
(dF)^-=\frac{\sqrt{-1}}{2}(T_{\bar{j}\bar{k}}^i\overline{\theta^j}\wedge\overline{\theta^k}\wedge\overline{\theta^i}
-\overline{T_{\bar{j}\bar{k}}^i}\theta^j\wedge\theta^k\wedge\theta^i).
\end{equation}
From (4.4)--(4.8), some direct calculations show the following formulas of norms,
\begin{equation}
|T^{D^1}|^2=T_{jk}^i\overline{T_{jk}^i}+T_{\bar{j}\bar{k}}^i\overline{T_{\bar{j}\bar{k}}^i},
\end{equation}
\begin{equation}
|N|^2=16T_{\bar{j}\bar{k}}^i\overline{T_{\bar{j}\bar{k}}^i},~~~~|\alpha_F|^2=2T_{ji}^i\overline{T_{jk}^k},
\end{equation}
\begin{equation}
|(dF)^+|^2=T_{jk}^i\overline{T_{jk}^i},~~~~|(dF)^-|^2=T_{\bar{j}\bar{k}}^i\overline{T_{\bar{j}\bar{k}}^i}+2T_{\bar{j}\bar{k}}^i\overline{T_{\bar{i}\bar{j}}^k}.
\end{equation}

Combing the structure equations (3.16) and (4.1), we have
\begin{equation}
(\varphi_j^i-\psi_j^i)\wedge \theta^j+T^i-\tau^i=0.
\end{equation}
From the following facts,
$$\varphi_j^i+\overline{\varphi_i^j}=0,~~\psi_j^i+\overline{\psi_i^j}=0,~~\mu_j^i+\mu_i^j=0,$$
and some analysis of the form types in (4.12), it follows \cite{Kob2}
\begin{equation}
\varphi_j^i-\psi_j^i=\frac{1}{2}T_{jk}^i \theta^k-\frac{1}{2}\overline{T_{ik}^j} \overline{\theta^k},
\end{equation}
\begin{equation}
\mu_j^i=-\frac{1}{2}\overline{T_{ij}^k} \theta^k+\frac{1}{2}(T_{\bar{j}\bar{k}}^i+T_{\bar{k}\bar{i}}^j-T_{\bar{i}\bar{j}}^k)\overline{\theta^k}.
\end{equation}

Set $\gamma=(\gamma_j^i)=(\varphi_j^i-\psi_j^i)$. Locally, let $\psi(t)=(\psi_j^i(t))$ be the connection form matrix of the canonical Hermitian connection $D^t$. Then
$\psi(t)=\varphi-t\gamma$. In particular, $\psi(0)=\varphi$ and $\psi(1)=\psi$, corresponding to the Lichnerowicz connection $D^0$
and the Chern connection $D^1$ ,respectively.

The structure equations of the canonical Hermitian connection $D^t$ are
\begin{eqnarray}
&&d\theta^i=-\psi_j^i(t)\wedge\theta^j+T^i(t),\\
&&d\psi_j^i(t)=-\psi_k^i(t)\wedge\psi_j^k(t)+\Psi_j^i(t),
\end{eqnarray}
where $T^i(t)$ is the torsion form, $\Psi_j^i(t)$ is the curvature form.
In particular, $T^i(0)=\tau^i$, $\Psi_j^i(0)=\Phi_j^i$, and  $T^i(1)=T^i$, $\Psi_j^i(1)=\Psi_j^i$.

From the above structure equation (4.16), the first Chern form associated to the  canonical Hermitian connection $D^t$, denoted by $\rho_1(t)$, is
\begin{align}\label{E:1}
\rho_1(t)&=\sqrt{-1}\mathrm{tr}(\Psi(t))=\sqrt{-1}\Phi_j^j-\sqrt{-1}t ~d \gamma_j^j\notag\\
&=\sqrt{-1}\Phi_j^j-\sqrt{-1}\frac{t}{2} ~d (T_{jk}^j\theta^k-\overline{T_{jk}^j}\overline{\theta^k})\notag\\
&=\rho_1(0)+\frac{t}{2}~d \delta F.
\end{align}
The above formula is also obtained by using different methods in \cite{Gau3,DGM}.

Extending the action of the almost complex structure $J$ to any $k$-form $\phi$ by
$$(J\phi)(X_1,X_2,...,X_k)=(-1)^k \phi(JX_1,JX_2,...,JX_k),$$
where $X_1,X_2,...,X_k\in\Gamma(TM)$. Set  $\rho^{(1)}(t)=\frac{1}{2}[\rho_1(t)+J\rho_1(t)]$, i.e., $\rho^{(1)}(t)$ is the $(1,1)$-component of the  first Chern form $\rho_1(t)$. If $(M,J,h)$ is
a Hermitian manifold, it is well-known that the  first Chern form $\rho_1(1)$ is a real $(1,1)$-form. Then in the integrable case, (4.17) implies
\begin{equation}
\rho^{(1)}(1)=\rho_1(1)=\rho_1(0)+\frac{1}{2}~d \delta F.
\end{equation}
Hence, we get the following generalization of Theorem 1.2 in \cite{LY},
\begin{prop}
Let $(M,J,h)$ be a compact Hermitian manifold. $c_1^{AC}(M)$ is the first Aeppli-Chern class of the anti-canonical line bundle $K_M^{\ast}$. Then
$\rho^{(1)}(t)$ represents $c_1^{AC}(M)$ in $H_A^{1,1}(M;\mathrm{R})$. More precisely,
$$\rho^{(1)}(t)=\rho_1(1)+\frac{t-1}{2}(\partial\partial^\ast F+\bar{\partial}\bar{\partial}^\ast F).$$
\end{prop}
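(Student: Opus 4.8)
The plan is to combine the Chern-form transformation formula (4.17) with the standard fact, valid in the integrable case, that the Chern first Chern form $\rho_1(1)$ is a real $(1,1)$-form, and then to identify the extra term $\frac{t}{2}\,d\delta F$ in terms of $\partial,\bar\partial$ operators. First I would start from (4.17), which reads $\rho_1(t)=\rho_1(0)+\frac{t}{2}\,d\delta F$, and rewrite it relative to the Chern connection by subtracting the $t=1$ instance: since $\rho_1(1)=\rho_1(0)+\frac{1}{2}\,d\delta F$ (this is (4.18)), I get immediately
\begin{equation*}
\rho_1(t)=\rho_1(1)+\frac{t-1}{2}\,d\delta F.
\end{equation*}
So everything reduces to computing the $(1,1)$-component of $\rho_1(t)$ and expressing $d\delta F$ through the Dolbeault operators.

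Next I would take the $(1,1)$-part. Because $M$ is Hermitian (integrable $J$), $\rho_1(1)$ is already a real $(1,1)$-form, so $\rho^{(1)}(1)=\rho_1(1)$. It therefore suffices to compute the $(1,1)$-component of $d\delta F$. The key computation is to decompose $d\delta F$ by bidegree. Writing $d=\partial+\bar\partial$ and $\delta=\delta'+\delta''$ with $\delta'=-\ast\bar\partial\ast$ the adjoint of $\partial$ and $\delta''$ the adjoint of $\bar\partial$ (up to the usual sign/conjugation conventions), the codifferential acting on the real $(1,1)$-form $F$ produces a $1$-form; its exterior derivative $d\delta F$ then splits into $(2,0)$, $(1,1)$ and $(0,2)$ pieces. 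The $(1,1)$-component is exactly $\partial\delta'' F+\bar\partial\delta' F$. Using $\delta''=\bar\partial^\ast$ and $\delta'=\partial^\ast$, this is $\partial\bar\partial^\ast F+\bar\partial\partial^\ast F$. Taking real parts and being careful with conventions so that the expression is the one stated, I would arrive at
\begin{equation*}
\rho^{(1)}(t)=\rho^{(1)}(1)+\frac{t-1}{2}\big(\partial\partial^\ast F+\bar\partial\bar\partial^\ast F\big)=\rho_1(1)+\frac{t-1}{2}\big(\partial\partial^\ast F+\bar\partial\bar\partial^\ast F\big),
\end{equation*}
which is the displayed formula.

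The cohomological statement is then obtained by recalling the definition of the Aeppli–Chern class: $\rho_1(1)$ represents $c_1^{AC}(M)$, and the correction term $\frac{t-1}{2}(\partial\partial^\ast F+\bar\partial\bar\partial^\ast F)$ lies in the image of $\partial\partial^\ast+\bar\partial\bar\partial^\ast$, hence is zero in the Aeppli cohomology $H_A^{1,1}(M;\mathbb{R})$ (whose space of coboundaries is precisely $\operatorname{im}\partial+\operatorname{im}\bar\partial$ modulo closedness conditions, so that $\partial$- and $\bar\partial$-exact contributions vanish). Thus $\rho^{(1)}(t)$ and $\rho_1(1)$ define the same Aeppli–Chern class for every $t$. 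The main obstacle I expect is bookkeeping the bidegree decomposition of $d\delta F$ and matching sign conventions: one must verify that it is $\partial\partial^\ast F+\bar\partial\bar\partial^\ast F$ (rather than the formally similar $\partial\bar\partial^\ast F+\bar\partial\partial^\ast F$) that appears, which hinges on how $\delta=-\ast d\ast$ distributes over $\partial$ and $\bar\partial$ on a $(1,1)$-form and on the reality of $F$; once the conventions are pinned down, the rest is formal, and the identification with the Aeppli–Chern class follows as in \cite{LY}.
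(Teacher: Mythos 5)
Your proposal is correct and follows essentially the same route as the paper: both start from (4.17)--(4.18) to get $\rho_1(t)=\rho_1(1)+\frac{t-1}{2}\,d\delta F$ and then extract the $(1,1)$-part, the paper doing so via $\frac12(d\delta F+J\,d\delta F)=\partial\partial^\ast F+\bar\partial\bar\partial^\ast F$, you via a direct bidegree count. One slip to fix: your intermediate claim that the $(1,1)$-component of $d\delta F$ is $\partial\bar\partial^\ast F+\bar\partial\partial^\ast F$ has the bidegrees backwards --- since $\partial^\ast F$ is a $(0,1)$-form and $\bar\partial^\ast F$ a $(1,0)$-form, $\partial\bar\partial^\ast F$ is of type $(2,0)$ and $\bar\partial\partial^\ast F$ of type $(0,2)$, so the $(1,1)$-part is $\partial\partial^\ast F+\bar\partial\bar\partial^\ast F$, exactly the expression you flag and then correctly use in your final display.
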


\begin{proof}
For a Hermitian manifold, from (4.17) and (4.18), we have
\begin{align*}\label{E:1}
\rho^{(1)}(t)&=\frac{1}{2}[\rho_1(0)+\frac{t}{2}~d \delta F+J(\rho_1(0)+\frac{t}{2}~d \delta F)]\notag\\
&=\frac{1}{2}[\rho_1(1)+\frac{t-1}{2}~d \delta F+\rho_1(1)+\frac{t-1}{2}J ~d \delta F)]\notag\\
&=\rho_1(1)+\frac{t-1}{4}(d \delta F+J~d \delta F).
\end{align*}
Since on a compact Hermitian manifold, $d=\partial+\bar{\partial}$, $\delta=-\ast d \ast=\partial^\ast+\bar{\partial}^\ast$,
where $\partial^\ast=-\ast \bar{\partial} \ast$ and $\bar{\partial}^\ast=-\ast\partial\ast$ are formal adjoints of $\partial$ and $\bar{\partial}$, respectively. Then
$$d \delta F+J~d \delta F=2(\partial\partial^\ast F+\bar{\partial}\bar{\partial}^\ast F).$$
Now the result follows.

\end{proof}

As in section 3, we define the curvature tensor, denoted by $K^t$, of the canonical Hermitian connection $D^t$ as follows,
$$
K^t(X,Y,Z,W)=\langle D_Z^t D_W^t Y-D_W^t D_Z^t Y-D_{[Z,W]}^t Y, X\rangle,
$$
where $X,Y,Z,W\in \Gamma(TM)$.
Then the curvature form
\begin{equation}
\Psi_j^i(t)=\frac{1}{2}K_{\bar{i}jkl}^t\theta^k\wedge\theta^l+K_{\bar{i}jk\bar{l}}^t\theta^k\wedge\overline{\theta^l}
+\frac{1}{2}K_{\bar{i}j\bar{k}\bar{l}}^t\overline{\theta^k}\wedge\overline{\theta^l},
\end{equation}
where $K_{\bar{i}jkl}^t+K_{\bar{i}jlk}^t=0$, $K_{\bar{i}j\bar{k}\bar{l}}^t+K_{\bar{i}j\bar{l}\bar{k}}^t=0$.
Here, $K_{\bar{i}jkl}^t=K^t(u_{\bar{i}},u_j,u_k,u_l)$, others are similar.

\begin{prop}
Let $(M,J,h)$ be an almost Hermitian manifold of real dimension $2n$. Then the components of the curvature tensor  $K^t$ of the canonical Hermitian connection $D^t$ are given by
\begin{eqnarray}
&&K_{\bar{i}jkl}^t=R_{\bar{i}jkl}
+\frac{1}{4}[\overline{T_{ip}^k}(\overline{T_{\bar{j}\bar{l}}^p}+\overline{T_{\bar{l}\bar{p}}^j}-\overline{T_{\bar{p}\bar{j}}^l})
-\overline{T_{ip}^l}(\overline{T_{\bar{j}\bar{k}}^p}+\overline{T_{\bar{k}\bar{p}}^j}-\overline{T_{\bar{p}\bar{j}}^k})]\\
&&~~~~~~~~~~~~~-\frac{t}{2}(T_{jl,k}^i-T_{jk,l}^i+T_{jp}^i T_{kl}^p-\overline{T_{ip}^j} \overline{T_{\bar{k}\bar{l}}^p})
+\frac{t^2-2t}{4}(T_{pk}^i T_{jl}^p-T_{pl}^i T_{jk}^p)\notag,\\
&&K_{\bar{i}jk\bar{l}}^t=R_{\bar{i}jk\bar{l}}
+\frac{1}{4}(T_{\bar{p}\bar{l}}^i+T_{\bar{l}\bar{i}}^p-T_{\bar{i}\bar{p}}^l)
(\overline{T_{\bar{j}\bar{k}}^p}+\overline{T_{\bar{k}\bar{p}}^j}-\overline{T_{\bar{p}\bar{j}}^k})-\frac{1}{4}\overline{T_{ip}^k} T_{pj}^l\\
&&~~~~~~~~~~~~~+\frac{t}{2}(T_{jk,\bar{l}}^i+\overline{T_{il,\bar{k}}^j})
+\frac{t^2-2t}{4}( T_{jk}^p \overline{T_{il}^p}-T_{pk}^i \overline{T_{pl}^j})\notag,\\
&&K_{\bar{i}j\bar{k}\bar{l}}^t=R_{\bar{i}j\bar{k}\bar{l}}
+\frac{1}{4}[T_{pj}^l(T_{\bar{p}\bar{k}}^i+T_{\bar{k}\bar{i}}^p-T_{\bar{i}\bar{p}}^k)
-T_{pj}^k(T_{\bar{p}\bar{l}}^i+T_{\bar{l}\bar{i}}^p-T_{\bar{i}\bar{p}}^l)]\\
&&~~~~~~~~~~~~~-\frac{t}{2}(\overline{T_{ik,l}^j}-\overline{T_{il,k}^j}+T_{jp}^i T_{\bar{k}\bar{l}}^p-\overline{T_{ip}^j} \overline{T_{kl}^p})
+\frac{t^2-2t}{4}(\overline{T_{ik}^p} \overline{T_{pl}^j}-\overline{T_{il}^p} \overline{T_{pk}^j})\notag,
\end{eqnarray}
where $T_{jk,l}^i$ and $T_{jk,\bar{l}}^i$ are defined by
\begin{equation}
dT_{jk}^i+T_{jk}^p \psi_p^i-T_{pk}^i \psi_j^p-T_{jp}^i \psi_k^p=T_{jk,l}^i \theta^l+T_{jk,\bar{l}}^i\overline{\theta^l}.
\end{equation}
\end{prop}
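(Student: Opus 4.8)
The plan is to reduce the curvature of $D^t$ to that of the Lichnerowicz connection $D^0$, whose Riemannian and torsion parts are already recorded in the structure equation (3.17), together with the explicitly known difference form $\gamma=\varphi-\psi$ of (4.13); the three asserted formulas are then read off from the decomposition (4.19) of $\Psi_j^i(t)$.

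First I would derive a master identity for the curvature forms. Writing $\psi(t)=\varphi-t\gamma=\psi+(1-t)\gamma$ and substituting into the structure equation $\Psi_j^i(t)=d\psi_j^i(t)+\psi_k^i(t)\wedge\psi_j^k(t)$ gives, after collecting powers of $t$,
\[
\Psi_j^i(t)=\Psi_j^i+(1-t)\,(D^1\gamma)_j^i+(1-t)^2\,\gamma_k^i\wedge\gamma_j^k,
\qquad (D^1\gamma)_j^i:=d\gamma_j^i+\psi_k^i\wedge\gamma_j^k+\gamma_k^i\wedge\psi_j^k .
\]
Setting $t=0$ and using $\psi(0)=\varphi$ identifies the $t$-independent part as the Lichnerowicz curvature, $\Phi_j^i=\Psi_j^i+(D^1\gamma)_j^i+\gamma_k^i\wedge\gamma_j^k$, and substituting this back yields the clean form
\[
\Psi_j^i(t)=\Phi_j^i-t\,(D^1\gamma)_j^i+(t^2-2t)\,\gamma_k^i\wedge\gamma_j^k .
\]
This already explains the coefficients $-t$ and $t^2-2t$ occurring in the statement, since $(1-t)-1=-t$ and $(1-t)^2-1=t^2-2t$.

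Next I would treat the three pieces separately. For the $t$-independent part I use (3.17) to split $\Phi_j^i$ into its Riemannian piece $\tfrac12(\Omega_j^i+\Omega_{n+j}^{n+i})+\tfrac{\sqrt{-1}}{2}(\Omega_j^{n+i}-\Omega_{n+j}^i)$ and the term $-\mu_k^i\wedge\overline{\mu_j^k}$. Extracting from the Riemannian piece the coefficients of $\theta^k\wedge\theta^l$, $\theta^k\wedge\overline{\theta^l}$ and $\overline{\theta^k}\wedge\overline{\theta^l}$ produces the leading terms $R_{\bar i j k l}$, $R_{\bar i j k\bar l}$ and $R_{\bar i j\bar k\bar l}$ (this is just the definition of the unitary-frame components via $\Omega_B^A=\tfrac12R_{ABCD}\,\omega^C\wedge\omega^D$); substituting (4.14) for $\mu$ and expanding $-\mu_k^i\wedge\overline{\mu_j^k}$ gives the $\tfrac14[\,\cdots\,]$ torsion corrections, the symmetrized three-term combinations such as $\overline{T_{\bar j\bar l}^p}+\overline{T_{\bar l\bar p}^j}-\overline{T_{\bar p\bar j}^l}$ originating from the coefficient of $\overline{\theta^k}$ in (4.14) and its conjugate. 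For the $t$-linear part I insert $\gamma_j^i=\tfrac12 T_{jk}^i\theta^k-\tfrac12\overline{T_{ik}^j}\,\overline{\theta^k}$ from (4.13) and compute $(D^1\gamma)_j^i$ using the definition (4.24) of the covariant torsion derivatives $T_{jk,l}^i,\,T_{jk,\bar l}^i$ together with the Chern structure equation (4.1) for $d\theta^k$; the latter releases the extra quadratic products $T_{jp}^iT_{kl}^p$, $\overline{T_{ip}^j}\,\overline{T_{\bar k\bar l}^p}$, and the like. The $t$-quadratic part is the direct wedge $\gamma_k^i\wedge\gamma_j^k$, supplying the remaining $\tfrac{t^2-2t}{4}(\cdots)$ terms. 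Matching coefficients in (4.19), with the antisymmetrization in $(k,l)$ dictated by that decomposition, gives the three formulas.

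The hard part will be purely the bookkeeping: in the $t$-independent piece one must expand $\mu_k^i\wedge\overline{\mu_j^k}$ carefully with the three-term coefficients of (4.14) and their conjugates, and in the linear piece one must correctly collect the torsion products produced by $d\theta^k$, all while tracking the antisymmetrizations in the holomorphic and antiholomorphic index pairs and the signs introduced by conjugation. Once the master identity is in place, no further conceptual input is needed.
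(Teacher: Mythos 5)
Your proposal is correct and follows essentially the same route as the paper: the master identity $\Psi_j^i(t)=\Phi_j^i-t(d\gamma_j^i+\psi_k^i\wedge\gamma_j^k+\gamma_k^i\wedge\psi_j^k)+(t^2-2t)\gamma_k^i\wedge\gamma_j^k$ is exactly the paper's starting point, and the subsequent extraction of the $(2,0)$, $(1,1)$, $(0,2)$ components of $\Phi_j^i$ via (3.17) and (4.14), of $\gamma_k^i\wedge\gamma_j^k$ via (4.13), and of the $t$-linear term via the Chern structure equation and the definition of $T_{jk,l}^i$, $T_{jk,\bar l}^i$ is precisely the bookkeeping the paper carries out. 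The only cosmetic difference is that you obtain the master identity by expanding around $\psi$ and then eliminating $\Psi$ at $t=0$, whereas the paper expands around $\varphi$ and substitutes $\varphi=\psi+\gamma$; these are algebraically identical.
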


\begin{proof}
From the structure equation (4.16), the curvature form $\Psi_j^i(t)$ is
\begin{align}\label{E:1}
\Psi_j^i(t)&=\Psi_j^i(0)-t(d\gamma_j^i+\varphi_k^i\wedge\gamma_j^k+\gamma_k^i\wedge\varphi_j^k)+t^2\gamma_k^i\wedge\gamma_j^k\notag\\
&=\Psi_j^i(0)-t(d\gamma_j^i+\psi_k^i\wedge\gamma_j^k+\gamma_k^i\wedge\psi_j^k)+(t^2-2t)\gamma_k^i\wedge\gamma_j^k,
\end{align}
where
$$\Psi_j^i(0)=\Phi_j^i=-\mu_k^i\wedge\overline{\mu_j^k}+\frac{1}{2}(\Omega_j^i+\Omega_{n+j}^{n+i})+\frac{\sqrt{-1}}{2}(\Omega_j^{n+i}-\Omega_{n+j}^i)$$
is the curvature form of the Lichnerowicz connection $D^0$.

From (4.14), we get the form decomposition of $\mu_p^i\wedge\overline{\mu_j^p}$ as follows,
\begin{eqnarray}
&&(\mu_p^i\wedge\overline{\mu_j^p})^{(2,0)}=-\frac{1}{4}\overline{T_{ip}^k}(\overline{T_{\bar{j}\bar{l}}^p}
+\overline{T_{\bar{l}\bar{p}}^j}-\overline{T_{\bar{p}\bar{j}}^l})\theta^k\wedge\theta^l,\\
&&(\mu_p^i\wedge\overline{\mu_j^p})^{(0,2)}=
-\frac{1}{4}T_{pj}^l(T_{\bar{p}\bar{k}}^i+T_{\bar{k}\bar{i}}^p-T_{\bar{i}\bar{p}}^k)\overline{\theta^k}\wedge\overline{\theta^l},\\
&&(\mu_p^i\wedge\overline{\mu_j^p})^{(1,1)}=\frac{1}{4}[\overline{T_{ip}^k} T_{pj}^l-(T_{\bar{p}\bar{l}}^i
+T_{\bar{l}\bar{i}}^p-T_{\bar{i}\bar{p}}^l)
(\overline{T_{\bar{j}\bar{k}}^p}+\overline{T_{\bar{k}\bar{p}}^j}-\overline{T_{\bar{p}\bar{j}}^k})]\theta^k\wedge\overline{\theta^l}.
\end{eqnarray}
For the Riemannian curvature component, since $$\frac{1}{2}(\Omega_j^i+\Omega_{n+j}^{n+i})+\frac{\sqrt{-1}}{2}(\Omega_j^{n+i}-\Omega_{n+j}^i)=R(u_{\bar{i}},u_j,\cdot,\cdot),$$
then we have
\begin{equation}
\frac{1}{2}(\Omega_j^i+\Omega_{n+j}^{n+i})+\frac{\sqrt{-1}}{2}(\Omega_j^{n+i}-\Omega_{n+j}^i)=\frac{1}{2}R_{\bar{i}jkl}\theta^k\wedge\theta^l
+R_{\bar{i}jk\bar{l}}\theta^k\wedge\overline{\theta^l}+\frac{1}{2}R_{\bar{i}j\bar{k}\bar{l}}\overline{\theta^k}\wedge\overline{\theta^l}.
\end{equation}

From (4.13), it follows
\begin{equation}
\gamma_p^i\wedge\gamma_j^p=\frac{1}{4}T_{pk}^i T_{jl}^p\theta^k\wedge\theta^l
+\frac{1}{4}\overline{T_{ik}^p} \overline{T_{pl}^j}\overline{\theta^k}\wedge\overline{\theta^l}+\frac{1}{4}( T_{jk}^p \overline{T_{il}^p}-T_{pk}^i \overline{T_{pl}^j})\theta^k\wedge\overline{\theta^l}.
\end{equation}
Combing (4.13) and the structure equation (4.1), a direct calculation shows that
\begin{eqnarray}
&&d\gamma_j^i+\psi_p^i\wedge\gamma_j^p+\gamma_p^i\wedge\psi_j^p=
\frac{1}{2}(dT_{jl}^i+T_{jl}^p \psi_p^i-T_{pl}^i \psi_j^p-T_{jp}^i \psi_l^p)\wedge\theta^l\notag\\
&&~~~~~~~+\frac{1}{2}T_{jp}^i T^p-\frac{1}{2}\overline{T_{ip}^j} \overline{T^p}-\frac{1}{2}(d\overline{T_{il}^j}+\overline{T_{il}^p} \overline{\psi_p^j}
-\overline{T_{pl}^j} \overline{\psi_i^p}-\overline{T_{ip}^j} \overline{\psi_l^p})\wedge\overline{\theta^l}.
\end{eqnarray}
Then from (4.23),  (4.30) implies
\begin{eqnarray}
&&d\gamma_j^i+\psi_p^i\wedge\gamma_j^p+\gamma_p^i\wedge\psi_j^p=
\frac{1}{2}(T_{jl,k}^i\theta^k+T_{jl,\bar{k}}^i\overline{\theta^k})\wedge\theta^l\notag\\
&&~~~~~~~+\frac{1}{2}T_{jp}^i T^p-\frac{1}{2}\overline{T_{ip}^j} \overline{T^p}
-\frac{1}{2}(\overline{T_{il,k}^j}\overline{\theta^k}+\overline{T_{il,\bar{k}}^j}\theta^k)\wedge\overline{\theta^l}.
\end{eqnarray}
Put (4.25)--(4.31) into the formula (4.24), then we get the results.
\end{proof}

As done in section 3, by using contractions of  the curvature component $K_{\bar{i}jk\bar{l}}^t$,  two Hermitian scalar curvatures $s_1(t)$ and $s_2(t)$ of the canonical Hermitian
connection $D^t$ are defined by
\begin{equation}
s_1(t)=K_{\bar{i}ij\bar{j}}^t,~~~~s_2(t)=K_{\bar{i}ji\bar{j}}^t.
\end{equation}

\begin{thm}
Let $(M,J,h)$ be an almost Hermitian manifold of real dimension $2n$.
Then
\begin{equation}
\begin{aligned}
s_1(t)
=&\frac{s}{2}-\frac{5}{12}|(dF)^-|^2+\frac{1}{16}|N^0|^2+\frac{1}{4}|(dF)_0^+|^2\\
&+[\frac{1}{4(n-1)}
+\frac{t-1}{2}]|\alpha_F|^2+\frac{t-2}{2}\delta \alpha_F
\end{aligned}
\end{equation}
and
\begin{equation}
\begin{aligned}
s_2(t)
=&\frac{s}{2}-\frac{1}{12}|(dF)^-|^2+\frac{1}{32}|N^0|^2-\frac{t^2-2t}{4}|(dF)_0^+|^2\\
&-[\frac{t^2-2t}{4(n-1)}+\frac{(t+1)^2}{8}]|\alpha_F|^2-\frac{t+1}{2}\delta \alpha_F.
\end{aligned}
\end{equation}
\end{thm}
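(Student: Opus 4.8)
The plan is to feed the curvature components of Proposition 4.2 into the definitions (4.32) and contract, using the transformation law (4.24) to isolate the dependence on $t$. Since contracting $\Psi^i_j(0)=\Phi^i_j$ reproduces exactly the scalar curvatures already computed in Theorem 3.1, I would write
\begin{equation*}
s_1(t)=s_1(0)-t\,L_1+(t^2-2t)\,Q_1,\qquad s_2(t)=s_2(0)-t\,L_2+(t^2-2t)\,Q_2,
\end{equation*}
where $Q_\alpha$ is the contraction of the $(1,1)$-part of $\gamma^i_k\wedge\gamma^k_j$ and $L_\alpha$ is the contraction of the $(1,1)$-part of $d\gamma^i_j+\psi^i_k\wedge\gamma^k_j+\gamma^i_k\wedge\psi^k_j$. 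This reduces the whole theorem to evaluating $L_\alpha,Q_\alpha$ for the two index patterns $K^t_{\bar i i j\bar j}$ and $K^t_{\bar i j i\bar j}$; the Riemannian and $t$-independent torsion contributions are then imported wholesale from (3.12)--(3.13).

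The quadratic corrections are the routine part. Starting from the $(1,1)$-component in (4.28), contracting for $s_1$ produces $T^p_{ij}\overline{T^p_{ij}}-T^i_{pj}\overline{T^i_{pj}}$, and since both terms equal $|(dF)^+|^2$ by (4.11) they cancel, so $Q_1=0$ and $s_1(t)$ carries no $t^2$-term, consistent with (4.33). For $s_2$ the same contraction gives $T^p_{ji}\overline{T^p_{ij}}-T^i_{pi}\overline{T^j_{pj}}$, which by the antisymmetry of the torsion and the norm formulas (4.10)--(4.11) equals $-|(dF)^+|^2-\tfrac12|\alpha_F|^2$; hence $Q_2=-\tfrac14\bigl(|(dF)^+|^2+\tfrac12|\alpha_F|^2\bigr)$. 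Rewriting $|(dF)^+|^2=|(dF)_0^+|^2+\tfrac{1}{n-1}|\alpha_F|^2$ via (2.6)--(2.7) then delivers exactly the $-\tfrac{t^2-2t}{4}|(dF)_0^+|^2$ term and the $(t^2-2t)$-parts of the $|\alpha_F|^2$-coefficient in (4.34).

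The linear corrections are where the real work lies. Only the $(1,1)$-part of (4.31) survives the contraction, and since the torsion two-form $T^p$ is purely of type $(2,0)+(0,2)$ the pieces $T^i_{jp}T^p$ drop out, leaving just $\tfrac{t}{2}\bigl(T^i_{jk,\bar l}+\overline{T^j_{il,\bar k}}\bigr)$. Contracting for both patterns and using that $T^i_{ji}$ is the coefficient of the $(1,0)$-part of the Lee form $\alpha_F$ (by (4.6)), everything collapses to $\pm\,t\,\mathrm{Re}\,T^i_{ji,\bar j}$ — the two patterns giving opposite signs, which already explains why the linear terms of $s_1$ and $s_2$ in (4.33)--(4.34) are negatives of one another. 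The crux is therefore the single identity
\begin{equation*}
T^i_{ji,\bar j}+\overline{T^i_{ji,\bar j}}=-\bigl(|\alpha_F|^2+\delta\alpha_F\bigr),
\end{equation*}
which I expect to be the main obstacle: it converts the \emph{Chern}-covariant divergence of the torsion trace (defined by (4.23)) into the \emph{Levi-Civita} codifferential $\delta\alpha_F$, the discrepancy between the two connections producing exactly the $|\alpha_F|^2$ term. I would prove it by expanding $\delta\alpha_F=-\ast d\ast\alpha_F$ in the unitary coframe and using (4.13)--(4.14) to trade the Levi-Civita connection forms for the Chern ones, the torsion part of $\gamma=\varphi-\psi$ generating the quadratic term. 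For $s_1$ alone there is a shortcut that bypasses this computation: since $s_1(t)$ is the metric trace of the first Chern form $\rho_1(t)$, formula (4.17), $\rho_1(t)=\rho_1(0)+\tfrac{t}{2}\,d\delta F$, gives the linear correction directly as $\tfrac{t}{2}$ times the trace of $d\delta F$, which is $\tfrac{t}{2}\bigl(|\alpha_F|^2+\delta\alpha_F\bigr)$. Once the identity is in hand, assembling $s_\alpha(0)$ from (3.12)--(3.13) with the corrections $Q_\alpha$ and $L_\alpha$ and collecting the $|\alpha_F|^2$ and $\delta\alpha_F$ coefficients — pure bookkeeping at that point — reproduces (4.33) and (4.34).
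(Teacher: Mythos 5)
Your proposal follows the paper's own proof essentially step for step: the paper contracts the components of Proposition 4.2, observes that the quadratic correction vanishes for $s_1(t)$ and contributes $-\frac{t^2-2t}{4}\bigl(|(dF)^+|^2+\frac{1}{2}|\alpha_F|^2\bigr)$ to $s_2(t)$, and disposes of the linear term by exactly the identity you isolate, written there as $T_{ij,\bar{j}}^i+\overline{T_{ij,\bar{j}}^i}=\langle d\delta F,F\rangle=|\alpha_F|^2+\delta\alpha_F$ before substituting (3.12)--(3.13). The only cosmetic difference is that the paper applies this trace identity uniformly to both scalar curvatures rather than routing $s_1$ through the first Chern form.
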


\begin{proof}
For $s_1(t)$, from (4.22), we have
\begin{align}\label{E:1}
s_1(t)&=s_1(0)+\frac{t}{2}(T_{ij,\bar{j}}^i+\overline{T_{ij,\bar{j}}^i})\notag\\
&=s_1(0)+\frac{t}{2}\langle d\delta F, F\rangle=s_1(0)+\frac{t}{2}(|\alpha_F|^2+\delta \alpha_F).
\end{align}
Then  (3.12) and  (4.35) imply (4.33).

For $s_2(t)$, from (4.22), we have
\begin{align}\label{E:1}
s_2(t)&=s_2(0)+\frac{t}{2}(T_{ji,\bar{j}}^i+\overline{T_{ij,\bar{i}}^j})
+\frac{t^2-2t}{4}( T_{ji}^p \overline{T_{ij}^p}-T_{pi}^i \overline{T_{pj}^j})\notag\\
&=s_2(0)-\frac{t}{2}(|\alpha_F|^2+\delta \alpha_F)
-\frac{t^2-2t}{4}( T_{ij}^p \overline{T_{ij}^p}+T_{pi}^i \overline{T_{pj}^j}).
\end{align}
Combing with (4.10) and (4.11), it follows that
\begin{equation}
s_2(t)=s_2(0)-\frac{t}{2}(|\alpha_F|^2+\delta \alpha_F)
-\frac{t^2-2t}{4}|(dF)^+|^2-\frac{t^2-2t}{8}|\alpha_F|^2.
\end{equation}
Then  (3.13) and  (4.37) imply (4.34).

\end{proof}

Now, we give two particular cases of Theorem 4.3.  For Hermitian manifolds, equivalently, $(dF)^-=N^0=0$, we have

\begin{cor}
Let $(M,J,h)$ be a Hermitian manifold of real dimension $2n$.  Then
\begin{equation}
s_1(t)=\frac{s}{2}+\frac{1}{4}|dF|^2+\frac{t-1}{2}|\alpha_F|^2+\frac{t-2}{2}\delta \alpha_F
\end{equation}
and
\begin{equation}
s_2(t)=\frac{s}{2}-\frac{t^2-2t}{4}|dF|^2-\frac{(t+1)^2}{8}|\alpha_F|^2-\frac{t+1}{2}\delta \alpha_F.
\end{equation}
\end{cor}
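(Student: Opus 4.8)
The plan is to specialize Theorem 4.3 to the integrable case and then trade the primitive norm $|(dF)_0^+|^2$ for the full norm $|dF|^2$. A Hermitian manifold is precisely one lying in $\mathcal{W}_3\oplus\mathcal{W}_4$, so in the Gray--Hervella framework its defining feature is $(dF)^-=N^0=0$. Setting these two components to zero in the identities (4.33) and (4.34) kills every term carrying $|(dF)^-|^2$ or $|N^0|^2$, leaving
\begin{equation*}
s_1(t)=\frac{s}{2}+\frac{1}{4}|(dF)_0^+|^2+\Big[\frac{1}{4(n-1)}+\frac{t-1}{2}\Big]|\alpha_F|^2+\frac{t-2}{2}\delta\alpha_F
\end{equation*}
together with the corresponding truncation of $s_2(t)$, whose only $(dF)$-dependence is the single term $-\frac{t^2-2t}{4}|(dF)_0^+|^2$.

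The key step is to eliminate $|(dF)_0^+|^2$. In the integrable case $|\nabla F|^2=|dF|^2$, as recorded just after (2.7), so the third line of the norm decomposition (2.7), evaluated at $N^0=(dF)^-=0$, collapses to $|dF|^2=\tfrac{1}{n-1}|\alpha_F|^2+|(dF)_0^+|^2$. Solving this yields the single substitution I would use throughout,
\begin{equation*}
|(dF)_0^+|^2=|dF|^2-\frac{1}{n-1}|\alpha_F|^2.
\end{equation*}

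Finally I would insert this identity into both truncated formulas. For $s_1(t)$ the term $\frac{1}{4}|(dF)_0^+|^2$ becomes $\frac{1}{4}|dF|^2-\frac{1}{4(n-1)}|\alpha_F|^2$, and the resulting $-\frac{1}{4(n-1)}|\alpha_F|^2$ cancels the $+\frac{1}{4(n-1)}|\alpha_F|^2$ already present, producing exactly (4.38). For $s_2(t)$ the coefficient is $-\frac{t^2-2t}{4}$, so the substitution yields $-\frac{t^2-2t}{4}|dF|^2+\frac{t^2-2t}{4(n-1)}|\alpha_F|^2$, and again the $|\alpha_F|^2$ contribution cancels the $-\frac{t^2-2t}{4(n-1)}|\alpha_F|^2$ term, leaving (4.39). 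There is no serious obstacle here; the only point demanding care is the sign bookkeeping in these two cancellations, which are arranged precisely so that all explicit $(n-1)$-dependence disappears once the manifold is Hermitian.
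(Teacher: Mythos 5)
Your proposal is correct and is essentially the paper's own (implicit) argument: the paper derives Corollary 4.4 by setting $(dF)^-=N^0=0$ in Theorem 4.3 and using $|\nabla F|^2=|dF|^2$ together with the decomposition (2.7) to replace $|(dF)_0^+|^2$ by $|dF|^2-\frac{1}{n-1}|\alpha_F|^2$, exactly as you do. The two cancellations of the $\frac{1}{n-1}|\alpha_F|^2$ terms check out.
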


For almost Hermitian surfaces, $(dF)^-=(dF)_{0}^+=0$,  then we have

\begin{cor}
Let $(M,J,h)$ be an almost Hermitian surface. Then
\begin{equation}
s_1(t)=\frac{s}{2}+\frac{1}{16}|N|^2+\frac{2t-1}{4}|\alpha_F|^2+\frac{t-2}{2}\delta \alpha_F
\end{equation}
and
\begin{equation}
s_2(t)=\frac{s}{2}+\frac{1}{32}|N|^2-\frac{3t^2-2t+1}{8}|\alpha_F|^2-\frac{t+1}{2}\delta \alpha_F.
\end{equation}
\end{cor}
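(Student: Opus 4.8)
The plan is to obtain Corollary 4.5 as the specialization of Theorem 4.3 to real dimension four. First I would record the structural facts that hold on every almost Hermitian surface, i.e. for $n=2$: the components $(dF)^-$ and $(dF)_0^+$ of $\nabla F$ vanish, and the skew-symmetric part $\mathfrak{b}N$ of the Nijenhuis tensor vanishes, so that $N=N^0$ and hence $|N|^2=|N^0|^2$.

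I would justify these vanishings from the type considerations already set up in the paper. The form $(dF)^-$ is by definition the $(3,0)+(0,3)$-component of $dF$, and $\mathfrak{b}N$ is of the same type: by (4.3) the only nonzero components of $N$ are $N_{\bar i\bar j}^k$ together with their conjugates, so the total skew-symmetrization $\mathfrak{b}N$ has all three slots of antiholomorphic (resp. holomorphic) type and thus lies in $\Lambda^{0,3}M\oplus\Lambda^{3,0}M$. Since the complex dimension is $2$, one has $\Lambda^{3,0}M=\Lambda^{0,3}M=0$, forcing $(dF)^-=0$ and $\mathfrak{b}N=0$. For $(dF)_0^+$ I would invoke that a primitive form of degree $k$ can be nonzero only when $k\le n$; as $(dF)_0^+$ is a primitive $3$-form while $n=2$, it must vanish.

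With these facts in hand, the remaining work is a substitution into (4.33) and (4.34) using $(dF)^-=0$, $(dF)_0^+=0$, $|N^0|^2=|N|^2$ and $n-1=1$. In (4.33) the coefficient of $|\alpha_F|^2$ collapses to $\frac14+\frac{t-1}2=\frac{2t-1}4$, which gives (4.40); in (4.34) it collapses to $-\bigl(\frac{t^2-2t}4+\frac{(t+1)^2}8\bigr)=-\frac{3t^2-2t+1}8$, which gives (4.41). All other terms carry over unchanged, so there is no real obstacle here: the only substantive point is the dimension-four vanishing of $(dF)^-$, $(dF)_0^+$ and $\mathfrak{b}N$, after which the statement is immediate from Theorem 4.3.
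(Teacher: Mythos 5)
Your proposal is correct and follows the paper's own route: the paper simply notes that $(dF)^-=(dF)_0^+=0$ on an almost Hermitian surface and substitutes into Theorem 4.3 with $n=2$, exactly as you do, and your coefficient computations $\frac14+\frac{t-1}{2}=\frac{2t-1}{4}$ and $\frac{t^2-2t}{4}+\frac{(t+1)^2}{8}=\frac{3t^2-2t+1}{8}$ check out. Your type-theoretic justification of the vanishings (including $\mathfrak{b}N=0$, hence $N=N^0$) is a welcome elaboration of what the paper leaves implicit.
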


\begin{rem}
From Theorem 4.3, we can obtain explicit relations among various scalar curvatures, for each of the 16 classes
( 4 classes for $n=2$) of almost Hermitian manifolds given by the Gray-Hervella classification.
For Hermitian manifolds, there are many studies with respect to the Hermitian scalar curvatures of the
 Bismut connection, the Lichnerowicz connection and the Chern connection (i.e., corresponding to $t=-1,0,1$ ) \cite{Gau1,AlI,IP1,LY,FuZ}. Recently, for almost Hermitian manifolds, $s_1(1)$ and $s_2(1)$  are also given in \cite{LeU} by using different method.
\end{rem}

\begin{rem}
As in the Hermitian case \cite{Gau1,LY}, on an almost Hermitian manifold, one can also define
four Ricci forms, denoted by $\rho^{(1)}(t)$, $\rho^{(2)}(t)$, $\rho^{(3)}(t)$ and $\rho^{(4)}(t)$, of canonical Hermitian connection $D^t$ as
follows,
$$\rho^{(1)}(t)=\sqrt{-1}K_{\bar{k}ki\bar{j}}^t\theta^i\wedge\overline{\theta^j},
~~~~\rho^{(2)}(t)=\sqrt{-1}K_{\bar{j}ik\bar{k}}^t\theta^i\wedge\overline{\theta^j},$$
$$\rho^{(3)}(t)=\sqrt{-1}K_{\bar{k}ik\bar{j}}^t\theta^i\wedge\overline{\theta^j},
~~~~\rho^{(4)}(t)=\sqrt{-1}K_{\bar{j}ki\bar{k}}^t\theta^i\wedge\overline{\theta^j}.$$
These Ricci forms are very useful in the research of almost Hermitian curvature flows and cohomology groups of almost Hermitian manifold, which
are worthy to study furthermore. One can refer to some related works in \cite{LY,ST1,ST2,ChZ}.

\end{rem}

\section{Some applications}

In this section, we show some applications of Theorem 4.3.

On a compact Hermitian manifold $(M,J,h)$ with the Chern connection, (4.38) implies the following integral formula of the first
Hermitian scalar curvature $s_1(1)$ and the Riemannian scalar curvature $s$,
$$\int_M [2s_1(1)-s]dv=\frac{1}{2}\int_M |dF|^2 dv\geq 0.$$
The equality holds if and only if $dF=0$, i.e., $(M,J,h)$ is a K\"{a}hler manifold \cite{Gau1,LY}.

In general, for an almost Hermitian manifold $(M,J,h)$ with canonical Hermitian connection $D^t$, we have

\begin{thm}
Let $(M,J,h)$ be a compact almost Hermitian manifold of real dimension $2n$ ($n\geq3$).

(1) If $(M,J,h)\in \mathcal{W}_2\oplus \mathcal{W}_3\oplus \mathcal{W}_4$
and $t\geq 1-\frac{1}{2(n-1)}$, then
\begin{equation*}
\int_M [2s_1(t)-s]dv\geq 0.
\end{equation*}
The equality holds if and only if $(M,J,h)$ is a locally conformally  K\"{a}hler manifold when $t=1-\frac{1}{2(n-1)}$ or
$(M,J,h)$ is a K\"{a}hler manifold when $t>1-\frac{1}{2(n-1)}$.

(2) If $(M,J,h)\in \mathcal{W}_1\oplus \mathcal{W}_4$
and $t\leq 1-\frac{1}{2(n-1)}$, then
$$\int_M [2s_1(t)-s]dv\leq 0.$$
The equality holds  if and only if $(M,J,h)$ is a locally conformally  K\"{a}hler manifold when $t=1-\frac{1}{2(n-1)}$ or
$(M,J,h)$ is a K\"{a}hler manifold when $t<1-\frac{1}{2(n-1)}$.

\end{thm}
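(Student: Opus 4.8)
The plan is to reduce everything to the single identity for $s_1(t)$ in Theorem 4.3. Doubling it and subtracting $s$ gives the pointwise formula
\begin{equation*}
2s_1(t)-s=-\tfrac{5}{6}|(dF)^-|^2+\tfrac{1}{8}|N^0|^2+\tfrac{1}{2}|(dF)_0^+|^2+\Big[\tfrac{1}{2(n-1)}+t-1\Big]|\alpha_F|^2+(t-2)\delta\alpha_F .
\end{equation*}
Because $M$ is compact and $\delta\alpha_F$ is the codifferential of a $1$-form, $\int_M\delta\alpha_F\,dv=0$ by Stokes' theorem, so the last term disappears upon integration. Hence $\int_M[2s_1(t)-s]\,dv$ is a fixed linear combination of the four nonnegative integrals $\int_M|(dF)^-|^2$, $\int_M|N^0|^2$, $\int_M|(dF)_0^+|^2$, $\int_M|\alpha_F|^2$, and the whole statement becomes a matter of reading off signs of coefficients. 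The only coefficient that changes sign with $t$ is $\tfrac{1}{2(n-1)}+t-1$, which is $\geq0$ exactly when $t\geq1-\tfrac{1}{2(n-1)}$; this is the origin of the threshold.

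For part (1) the hypothesis $(M,J,h)\in\mathcal{W}_2\oplus\mathcal{W}_3\oplus\mathcal{W}_4$ means $(dF)^-=0$, deleting the only term with a negative coefficient; in the stated range $t\geq1-\tfrac{1}{2(n-1)}$ the three surviving integrands $\tfrac{1}{8}|N^0|^2$, $\tfrac{1}{2}|(dF)_0^+|^2$, $[\tfrac{1}{2(n-1)}+t-1]|\alpha_F|^2$ are each nonnegative, so $\int_M[2s_1(t)-s]\,dv\geq0$. For part (2) the hypothesis $(M,J,h)\in\mathcal{W}_1\oplus\mathcal{W}_4$ means $N^0=(dF)_0^+=0$, leaving only $-\tfrac{5}{6}|(dF)^-|^2+[\tfrac{1}{2(n-1)}+t-1]|\alpha_F|^2$; for $t\leq1-\tfrac{1}{2(n-1)}$ both coefficients are $\leq0$, so the integral is $\leq0$.

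The delicate part, and the step I would handle most carefully, is the equality discussion, which rests on the standard fact that a nonnegative integrand with zero integral vanishes identically. At the threshold $t=1-\tfrac{1}{2(n-1)}$ the $|\alpha_F|^2$ coefficient is zero, so equality forces only the remaining positive-coefficient norms to vanish; in part (1) this gives $N^0=(dF)_0^+=0$, which together with the class assumption $(dF)^-=0$ leaves $\alpha_F$ as the sole possibly nonzero component of $\nabla F$, and by the Gray-Hervella classification (for $n\geq3$) this is precisely the class $\mathcal{W}_4$ of locally conformally K\"ahler manifolds; the analogue for part (2) at the threshold forces $(dF)^-=0$ and likewise lands in $\mathcal{W}_4$. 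Away from the threshold the $|\alpha_F|^2$ coefficient is strictly nonzero, so equality additionally forces $\alpha_F=0$; then all four components of $\nabla F$ vanish, and the norm identity (2.7) gives $|\nabla F|^2=0$, i.e.\ $\nabla F=0$, so the metric is K\"ahler. The main care is simply bookkeeping: tracking which components survive in each class and each sign regime for $t$ before invoking (2.7) to pass from "all components vanish" to K\"ahler.
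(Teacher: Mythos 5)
Your proposal is correct and follows essentially the same route as the paper: substitute the formula for $s_1(t)$ from Theorem 4.3, integrate (killing the $\delta\alpha_F$ term on the compact manifold), use the Gray--Hervella vanishing conditions for each class to drop the terms of the wrong sign, and analyze the equality case by forcing the surviving nonnegative integrands to vanish. Your equality discussion is in fact slightly more explicit than the paper's, which simply asserts that the vanishing of the remaining components yields the $\mathcal{W}_4$ (locally conformally K\"ahler) or K\"ahler conclusion.
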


\begin{proof}
(1) If $(M,J,h)\in \mathcal{W}_2\oplus \mathcal{W}_3\oplus \mathcal{W}_4$, then
\begin{equation}
(dF)^-=0,~~~~N=N^0.
\end{equation}
Combing (4.33) and (5.1), for $t\geq 1-\frac{1}{2(n-1)}$, we have
$$\int_M[2s_1(t)-s]dv=\frac{1}{8}\int_M |N|^2dv+\frac{1}{2}\int_M |(dF)_0^+|^2dv
+[t-1+\frac{1}{2(n-1)}]\int_M|\alpha_F|^2dv\geq 0.$$

If $\int_M[2s_1(t)-s]dv=0$, then three components in the above formula are all zero. The results followed.

(2) If $(M,J,h)\in \mathcal{W}_1\oplus \mathcal{W}_4$, then
\begin{equation}
N^0=0,~~~~(dF)_0^+=0.
\end{equation}
Combing (4.33) and (5.2), for $t\leq 1-\frac{1}{2(n-1)}$, we have
$$\int_M[2s_1(t)-s]dv=-\frac{5}{6}\int_M |(dF)^-|^2dv
+[t-1+\frac{1}{2(n-1)}]\int_M|\alpha_F|^2dv\leq 0.$$

If $\int_M[2s_1(t)-s]dv=0$, then two components in the above formula are all zero. The results followed.
\end{proof}

From Corollary 4.5, for almost Hermitian surfaces,  by using similar methods as in Theorem 5.1, we get
\begin{thm}
Let $(M,J,h)$ be a compact almost Hermitian surface.

(1) If $t\geq \frac{1}{2}$, then
$$\int_M [2s_1(t)-s]dv\geq 0.$$
The equality holds if and only if $(M,J,h)$ is a Hermitian surface when $t=\frac{1}{2}$ or  $(M,J,h)$ is a K\"{a}hler surface when $t>\frac{1}{2}$.

(2) If $J$ is integrable
and $t\leq \frac{1}{2}$, then
$$\int_M [2s_1(t)-s]dv\leq 0.$$
The equality holds if and only if $t=\frac{1}{2}$  or $(M,J,h)$ is a K\"{a}hler surface when $t<\frac{1}{2}$.

\end{thm}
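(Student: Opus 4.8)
The plan is to reduce everything to the pointwise formula for $s_1(t)$ on a surface furnished by Corollary 4.5. Since an almost Hermitian surface automatically satisfies $(dF)^-=(dF)_0^+=0$, equation (4.40) gives $s_1(t)=\frac{s}{2}+\frac{1}{16}|N|^2+\frac{2t-1}{4}|\alpha_F|^2+\frac{t-2}{2}\delta\alpha_F$, so that
\begin{equation*}
2s_1(t)-s=\frac{1}{8}|N|^2+\frac{2t-1}{2}|\alpha_F|^2+(t-2)\,\delta\alpha_F.
\end{equation*}
First I would integrate this identity over the compact manifold $M$. The only nontrivial input here is that $\int_M \delta\alpha_F\,dv=0$, which holds because $\delta\alpha_F$ is the codifferential of a $1$-form and $M$ is closed; hence by Stokes' theorem this term drops out. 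What remains is the clean integral formula
\begin{equation*}
\int_M[2s_1(t)-s]\,dv=\frac{1}{8}\int_M|N|^2\,dv+\frac{2t-1}{2}\int_M|\alpha_F|^2\,dv.
\end{equation*}

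Next I would perform the sign analysis. In case (1), $t\geq\frac{1}{2}$ forces $\frac{2t-1}{2}\geq0$, so both integrals on the right are nonnegative and the inequality $\int_M[2s_1(t)-s]\,dv\geq 0$ follows at once. In case (2) the hypothesis that $J$ is integrable means $N\equiv0$, so the first term vanishes identically and the integral equals $\frac{2t-1}{2}\int_M|\alpha_F|^2\,dv$; for $t\leq\frac{1}{2}$ this is $\leq 0$, giving the reverse inequality.

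The equality discussion is the part requiring care, and it is where I would split on whether $t$ equals the threshold $\frac{1}{2}$. For (1) with equality, the term $\frac{1}{8}\int_M|N|^2\,dv=0$ always forces $N=0$, i.e.\ $J$ integrable, so $(M,J,h)$ is a Hermitian surface; when $t>\frac{1}{2}$ the strictly positive coefficient additionally forces $\alpha_F=0$. For (2) with equality, $N=0$ is already assumed, and when $t<\frac{1}{2}$ the strictly negative coefficient forces $\alpha_F=0$, whereas when $t=\frac{1}{2}$ the coefficient vanishes and equality holds automatically for every integrable $J$. To upgrade ``$N=0$ and $\alpha_F=0$'' to the K\"{a}hler conclusion I would invoke the norm formula (2.7): on a surface $(dF)^-=(dF)_0^+=0$ and $\mathfrak{b}N=0$ so $N^0=N$, whence $|\nabla F|^2=|\alpha_F|^2+\frac{1}{4}|N|^2=0$, giving $\nabla F=0$, i.e.\ the K\"{a}hler condition. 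The main obstacle is purely bookkeeping: keeping the two boundary cases $t=\frac{1}{2}$ and $t\neq\frac{1}{2}$ straight and correctly translating each vanishing component back through the Gray--Hervella dictionary and (2.7) into the stated geometric conclusion.
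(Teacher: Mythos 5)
Your proposal is correct and is exactly the argument the paper intends: it derives the integral identity $\int_M[2s_1(t)-s]\,dv=\tfrac{1}{8}\int_M|N|^2dv+\tfrac{2t-1}{2}\int_M|\alpha_F|^2dv$ from Corollary 4.5 by killing the $\delta\alpha_F$ term via Stokes, then does the sign and equality analysis case by case (the paper itself only says "by using similar methods as in Theorem 5.1"). Your use of the norm formula (2.7) to pass from $N=\alpha_F=0$ to the K\"ahler condition is a harmless elaboration of the same dictionary the paper relies on.
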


Recently, for the Chern connection $D^1$, M. G. Dabkowski and M. T. Lock \cite{DaL} have constructed non-compact Hermitian manifolds with $2s_1(1)=s$ which are not K\"{a}hler manifolds. M. Lejmi and  M. Upmeier have given a problem in  Remark 3.3 in \cite{LeU}:  do higher-dimensional compact almost Hermitian non-K\"{a}hler manifolds with $2s_1(1)=s$ exist? For this question, from Theorem 5.1 (1), we have the following non-existence result.

\begin{cor}
Let $(M,J,h)$ be a compact almost Hermitian manifold of real dimension $2n$ ($n\geq3$).  If $(M,J,h)\in \mathcal{W}_2\oplus \mathcal{W}_3
\oplus \mathcal{W}_4$ and $2s_1(1)=s$, then $(M,J,h)$ is a K\"{a}hler manifold.

\end{cor}

On the other hand, for a compact Hermitian manifold $(M,J,h)$, from (4.38) and (4.39), the two Hermitian scalar curvatures $s_1(1)$ and $s_2(1)$ of the Chern connection satisfy
$$s_1(1)-s_2(1)=\frac{1}{2}|\alpha_F|^2+\frac{1}{2}\delta\alpha_F,$$
which implies the following well-known integral result \cite{Gau1},
\begin{equation}
\int_M [s_1(1)-s_2(1)]dv=\frac{1}{2}\int_M |\alpha_F|^2dv\geq 0.
\end{equation}
The equality holds if and only if $\alpha_F=0$, i.e., $(M,J,h)$ is a balanced manifold.

In fact, we can obtain similar integral formula on a compact almost Hermitian manifold $(M,J,h)$. From Theorem 4.3,
the difference between $s_1(t)$ and $s_2(t)$  of the canonical Hermitian connection $D^t$ is
\begin{eqnarray}
&&s_1(t)-s_2(t)=-\frac{1}{3}|(dF)^-|^2+\frac{1}{32}|N^0|^2+\frac{(t-1)^2}{4}|(dF)_0^+|^2\notag\\
&&~~~~~~~~~~~~~~~~~~~~~~~~~+\frac{(n+1)t^2+(6n-10)t+5-3n}{8(n-1)}|\alpha_F|^2+(t-\frac{1}{2})\delta\alpha_F.\notag
\end{eqnarray}
Then we get a general integral formula
\begin{eqnarray}
&&\int_M[s_1(t)-s_2(t)]dv=-\frac{1}{3}\int_M |(dF)^-|^2dv+\frac{1}{32}\int_M |N^0|^2dv\\
&&~~~+\frac{(t-1)^2}{4}\int_M|(dF)_0^+|^2dv+\frac{(n+1)t^2+(6n-10)t+5-3n}{8(n-1)}\int_M|\alpha_F|^2dv.\notag
\end{eqnarray}

\begin{thm}
Let $(M,J,h)$ be a compact almost Hermitian manifold of real dimension $2n$ ($n\geq 3$).

(1) If $(M,J,h)\in \mathcal{W}_2\oplus \mathcal{W}_3\oplus \mathcal{W}_4$, $t\in (-\infty, -3-2\sqrt{3}]\cup [-3+2\sqrt{3},+\infty)$, then
$$\int_M[s_1(t)-s_2(t)]dv\geq 0.$$
The equality holds if and only if $(M,J,h)$ is a balanced Hermitian manifold when $t=1$ or  $(M,J,h)$ is a K\"{a}hler manifold when $t\neq 1$.

(2) If $(M,J,h)\in \mathcal{W}_1\oplus \mathcal{W}_4$, $t\in [-1,\frac{1}{3}]$, then
$$\int_M[s_1(t)-s_2(t)]dv\leq 0.$$
The equality holds if and only if  $(M,J,h)$ is a K\"{a}hler manifold.

\end{thm}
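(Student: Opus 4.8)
The plan is to extract everything from the integral identity (5.4), which already writes
$$\int_M[s_1(t)-s_2(t)]dv=-\tfrac{1}{3}\int_M |(dF)^-|^2dv+\tfrac{1}{32}\int_M |N^0|^2dv+\tfrac{(t-1)^2}{4}\int_M|(dF)_0^+|^2dv+\tfrac{p(t)}{8(n-1)}\int_M|\alpha_F|^2dv,$$
where $p(t):=(n+1)t^2+(6n-10)t+5-3n$. Since $n\geq 3$ forces $8(n-1)>0$, and the coefficients $-\tfrac13$, $\tfrac1{32}$, $\tfrac{(t-1)^2}{4}$ of the other three terms have fixed signs, the entire argument reduces to controlling the sign of the single number $p(t)$ on the relevant $t$-range, uniformly in $n\geq 3$. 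The one genuinely useful algebraic step, which I would isolate first, is the rewriting $p(t)=n(t^2+6t-3)+(t^2-10t+5)$, so that $\partial p/\partial n=t^2+6t-3$ and $-3\pm 2\sqrt{3}$ appear naturally as the roots of this $n$-coefficient.

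For part (1), membership in $\mathcal{W}_2\oplus\mathcal{W}_3\oplus\mathcal{W}_4$ gives $(dF)^-=0$, so the first term drops and it suffices to prove $p(t)\geq 0$ on $(-\infty,-3-2\sqrt{3}]\cup[-3+2\sqrt{3},+\infty)$. On this range $t^2+6t-3\geq 0$, so $p$ is non-decreasing in $n$ and hence $p(t)\geq p(t)\big|_{n=3}=4(t^2+2t-1)$. I would then note that $t^2+2t-1$ has roots $-1\pm\sqrt{2}$, and that the stated range lies inside $\{t^2+2t-1\geq 0\}$ because $-3+2\sqrt{3}\geq -1+\sqrt{2}$ and $-3-2\sqrt{3}\leq -1-\sqrt{2}$; in fact one gets $p(t)>0$ strictly. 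Thus the integral is a sum of non-negative terms. For equality, each term must vanish: $N^0=0$; since $p(t)>0$ also $\alpha_F=0$; and $(dF)_0^+=0$ whenever $t\neq 1$, with no constraint at $t=1$. Together with $(dF)^-=0$ this yields $\nabla F=0$ (K\"ahler) when $t\neq 1$, and $(dF)^-=N^0=\alpha_F=0$ (a balanced Hermitian manifold, $(dF)_0^+$ being unconstrained) when $t=1$, exactly as stated.

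Part (2) is the mirror situation. Here $\mathcal{W}_1\oplus\mathcal{W}_4$ gives $N^0=(dF)_0^+=0$, leaving $-\tfrac13\int_M|(dF)^-|^2dv+\tfrac{p(t)}{8(n-1)}\int_M|\alpha_F|^2dv$; since the first term is $\leq 0$, I need $p(t)\leq 0$ on $[-1,\tfrac13]$. Now $[-1,\tfrac13]\subset(-3-2\sqrt{3},-3+2\sqrt{3})$, so $t^2+6t-3<0$, $p$ is decreasing in $n$, and therefore $p(t)\leq p(t)\big|_{n=3}=4(t^2+2t-1)$, which is $<0$ on $[-1,\tfrac13]$ because $\tfrac13<-1+\sqrt{2}$. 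Hence both terms are $\leq 0$ and the integral is $\leq 0$, with equality forcing $(dF)^-=0$ and, since $p(t)<0$, also $\alpha_F=0$; combined with $N^0=(dF)_0^+=0$ this gives a K\"ahler manifold.

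The hard part is precisely this uniform-in-$n$ sign analysis: the constants $-3\pm 2\sqrt{3}$ are not roots of $p(t)$ for any fixed $n$ (for $n=3$ the roots of $p$ are $-1\pm\sqrt{2}$), but are the roots of the $n$-coefficient $t^2+6t-3$, so the whole mechanism rests on combining monotonicity of $p$ in $n$ with the nested inclusions of the two $t$-intervals into $\{t^2+2t-1\geq 0\}$ and $\{t^2+2t-1\leq 0\}$. Verifying these numerical comparisons, and tracking the strictness of $p(t)>0$ (resp. $p(t)<0$) carefully so that the equality characterizations come out right — in particular the $t=1$ exception in part (1) — is where I expect the only real care to be needed.
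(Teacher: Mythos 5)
Your proposal is correct and follows essentially the same route as the paper: both reduce to the integral identity (5.4), drop the terms killed by the Gray--Hervella class conditions (5.1) and (5.2), and establish the sign of $p(t)=(n+1)t^2+(6n-10)t+5-3n$ uniformly in $n\geq 3$, then read off the equality cases term by term. The only (cosmetic) difference is that you exploit linearity of $p$ in $n$ to reduce to the worst case $n=3$, where $p(t)=4(t^2+2t-1)$, whereas the paper tracks the monotonicity and limits of the roots $\frac{5-3n\pm 2\sqrt{3n^2-8n+5}}{n+1}$; both give the same strict inequalities $p(t)>0$ (resp.\ $p(t)<0$) and hence the same conclusion.
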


\begin{proof}
(1) Obviously, $t\in (-\infty, \frac{5-3n-2\sqrt{3n^2-8n+5}}{n+1}]\cup [\frac{5-3n+2\sqrt{3n^2-8n+5}}{n+1},+\infty)$ yields $(n+1)t^2+(6n-10)t+5-3n\geq 0$.
In fact, the sequence $\{\frac{5-3n-2\sqrt{3n^2-8n+5}}{n+1}\}$ is monotonically decreasing, and the sequence $\{\frac{5-3n+2\sqrt{3n^2-8n+5}}{n+1}\}$ is monotonically increasing. For $n\rightarrow +\infty$, the limits of these two sequences are $-3-2\sqrt{3}$ and $-3+2\sqrt{3}$, respectively. Then for $t\in (-\infty, -3-2\sqrt{3}]\cup [-3+2\sqrt{3},+\infty) $,
$$(n+1)t^2+(6n-10)t+5-3n >0.$$

For $(M,J,h)\in \mathcal{W}_2\oplus \mathcal{W}_3\oplus \mathcal{W}_4$, combing (5.1) and (5.4), we get
\begin{eqnarray}
&&\int_M[s_1(t)-s_2(t)]dv=\frac{1}{32}\int_M |N|^2dv+\frac{(t-1)^2}{4}\int_M|(dF)_0^+|^2dv\notag\\
&&~~~~~~~~~+\frac{(n+1)t^2+(6n-10)t+5-3n}{8(n-1)}\int_M|\alpha_F|^2dv\geq 0.\notag
\end{eqnarray}
If $\int_M[s_1(t)-s_2(t)]dv=0$, then three components in the above formula are all zero. A direct analysis implies the results.

(2) For $t\in [-1,\frac{1}{3}]$, $(n+1)t^2+(6n-10)t+5-3n <0$. Since $(M,J,h)\in \mathcal{W}_1\oplus \mathcal{W}_4$, from (5.2) and (5.4), we have
$$\int_M[s_1(t)-s_2(t)]dv=\frac{(n+1)t^2+(6n-10)t+5-3n}{8(n-1)}\int_M|\alpha_F|^2dv-\frac{1}{3}\int_M |(dF)^-|^2dv.$$
Then $\int_M[s_1(t)-s_2(t)]dv\leq0$, with equality holding if and only if  $(dF)^-=\alpha_F=0$, i.e., $(M,J,h)$ is a K\"{a}hler manifold.

\end{proof}

For completeness, we also give a similar result for  almost Hermitian surfaces.
\begin{thm}
Let $(M,J,h)$ be a compact almost Hermitian surface.

(1) If  $t\in (-\infty, -1]\cup [\frac{1}{3},+\infty)$, then
$$\int_M[s_1(t)-s_2(t)]dv\geq 0.$$
The equality holds if and only if $(M,J,h)$ is a Hermitian surface when $t\in\{-1, \frac{1}{3}\}$ or $(M,J,h)$ is a K\"{a}hler surface when $t\in (-\infty, -1)\cup (\frac{1}{3},+\infty)$.

(2) If $J$ is integrable and $t\in [-1,\frac{1}{3}]$, then
$$\int_M[s_1(t)-s_2(t)]dv\leq 0.$$
The equality holds if and only if $t=-1$, or $t=\frac{1}{3}$ or $(M,J,h)$ is a K\"{a}hler surface when $t\in (-1,\frac{1}{3})$.

\end{thm}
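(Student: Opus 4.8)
The plan is to reduce the statement to the surface-specific identities of Corollary 4.5 and then control everything through the sign of a single quadratic in $t$. First I would subtract (4.41) from (4.40). The $\frac{s}{2}$ terms cancel, the two $|N|^2$ contributions combine to $\frac{1}{32}|N|^2$, the $\delta\alpha_F$ terms combine to $\frac{2t-1}{2}\delta\alpha_F$, and the $|\alpha_F|^2$ terms combine to $\frac{3t^2+2t-1}{8}|\alpha_F|^2$, giving the pointwise identity
$$s_1(t)-s_2(t)=\frac{1}{32}|N|^2+\frac{3t^2+2t-1}{8}|\alpha_F|^2+\frac{2t-1}{2}\delta\alpha_F.$$

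Next I would integrate over the compact surface $M$. Since $\delta\alpha_F$ is the codifferential of a $1$-form, $\int_M\delta\alpha_F\,dv=0$, so the last term drops and
$$\int_M[s_1(t)-s_2(t)]\,dv=\frac{1}{32}\int_M|N|^2\,dv+\frac{3t^2+2t-1}{8}\int_M|\alpha_F|^2\,dv.$$
The key algebraic fact is the factorization $3t^2+2t-1=(3t-1)(t+1)$, so the coefficient vanishes exactly at $t=\frac{1}{3}$ and $t=-1$, is strictly positive on $(-\infty,-1)\cup(\frac{1}{3},+\infty)$, and is negative on $(-1,\frac{1}{3})$.

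For part (1), on $t\in(-\infty,-1]\cup[\frac{1}{3},+\infty)$ both coefficients are nonnegative and the integral is $\geq 0$. For the equality analysis I would use that the coefficient $\frac{1}{32}$ is always strictly positive, so equality forces $N=0$, i.e.\ $J$ integrable; then at the endpoints $t\in\{-1,\frac{1}{3}\}$ the quadratic vanishes and no further constraint arises, leaving precisely a Hermitian surface, whereas for $t\in(-\infty,-1)\cup(\frac{1}{3},+\infty)$ strict positivity of the quadratic additionally forces $\alpha_F=0$. For part (2), integrability already gives $N=0$, so only the quadratic term remains; on $[-1,\frac{1}{3}]$ it is nonpositive, hence $\int_M[s_1(t)-s_2(t)]\,dv\leq 0$, with equality automatic when $t\in\{-1,\frac{1}{3}\}$ and forcing $\alpha_F=0$ otherwise.

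The only step that is not pure bookkeeping is identifying the equality condition with the K\"{a}hler property. On an almost Hermitian surface $n=2$, so $(dF)^-=(dF)_0^+=0$ and thus $dF=\alpha_F\wedge F$; consequently $\alpha_F=0$ is equivalent to $dF=0$, that is, to $(M,J,h)$ being K\"{a}hler. With this identification the equality cases match the statement verbatim, and I expect no analytic difficulty beyond keeping track of the signs of the factored quadratic $(3t-1)(t+1)$.
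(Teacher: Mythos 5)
Your proposal is correct and follows essentially the same route as the paper: the paper's proof simply quotes Corollary 4.5 to obtain the integral identity $\int_M[s_1(t)-s_2(t)]dv=\frac{1}{32}\int_M|N|^2dv+\frac{(3t-1)(t+1)}{8}\int_M|\alpha_F|^2dv$ and then concludes "by a direct analysis." You have carried out that subtraction, the vanishing of $\int_M\delta\alpha_F\,dv$, the sign analysis of $(3t-1)(t+1)$, and the identification of the equality cases (using $dF=\alpha_F\wedge F$ on surfaces) exactly as intended.
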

\begin{proof}
From Corollary 4.5, for almost Hermitian surface, we have
$$\int_M[s_1(t)-s_2(t)]dv=\frac{1}{32}\int_M |N|^2dv+\frac{(3t-1)(t+1)}{8}\int_M |\alpha_F|^2dv.$$
Now, the results followed by a direct analysis.

\end{proof}

\begin{rem} There are also many studies of Hermitian manifolds with
flat curvature associated to some canonical Hermitian connection \cite{Boo, WYZ,YZ2}.
Recently, B.  Yang and F. Y. Zheng \cite{YZ1}, and D. Angella, A. Otal, L. Ugarte and R. Villacampa \cite{AOUV} have
introduced the definition of K\"{a}hler-like Hermitian structure on a Hermitian manifold, which is a strong curvature condition.
Obviously, the  K\"{a}hler-like curvature condition in \cite{YZ1,AOUV} implies $\int_M s_1(t)dv=\int_M s_2(t)dv$. The above
Theorem 5.4 and Theorem 5.5 are closely related to conjecture 2 in \cite{AOUV} which states that on a compact
 Hermitian manifold $(M,J,h)$, if it is K\"{a}hler-like for a canonical Hermitian connection $D^t$ with $t\neq 1, -1$, then $(M,J,h)$ is a K\"{a}hler manifold. Our results are obtained under weaker conditions: 1.
$(M,J,h)\in \mathcal{W}_2\oplus \mathcal{W}_3\oplus \mathcal{W}_4$ or $(M,J,h)\in \mathcal{W}_1\oplus \mathcal{W}_4$, not necessary a Hermitian manifold; 2. we only use integrations of the two Hermitian scalar curvatures. Hence, as a corollary, we get
\end{rem}

\begin{cor}
Let $(M,J,h)$ be a compact Hermitian manifold of real dimension $2n$ ($n\geq 3$).  If $(M,J,h)$ is K\"{a}hler-like for a canonical Hermitian connection $D^t$ with $t\in (-\infty, -3-2\sqrt{3}]\cup [-3+2\sqrt{3},1)\cup(1,+\infty)$, then $(M,J,h)$ is a K\"{a}hler manifold.
\end{cor}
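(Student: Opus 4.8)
The plan is to deduce the corollary directly from Theorem 5.4 (1), by recognizing that the K\"{a}hler-like hypothesis is precisely the equality case of that theorem's integral inequality. First I would record the two structural facts that make Theorem 5.4 (1) applicable. Since $(M,J,h)$ is Hermitian, its Gray--Hervella type satisfies $(dF)^-=N^0=0$; in particular $(dF)^-=0$, so $(M,J,h)\in\mathcal{W}_3\oplus\mathcal{W}_4\subset\mathcal{W}_2\oplus\mathcal{W}_3\oplus\mathcal{W}_4$, which is exactly the class hypothesis of Theorem 5.4 (1). Next, the K\"{a}hler-like condition for $D^t$ forces the curvature tensor $K^t$ to enjoy the symmetries of a K\"{a}hler curvature tensor, and under those symmetries the two contractions $s_1(t)=K^t_{\bar{i}ij\bar{j}}$ and $s_2(t)=K^t_{\bar{i}ji\bar{j}}$ coincide. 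Consequently $\int_M[s_1(t)-s_2(t)]\,dv=0$, as already observed in Remark 5.6.

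With these two facts in hand I would simply invoke Theorem 5.4 (1). The range $t\in(-\infty,-3-2\sqrt{3}]\cup[-3+2\sqrt{3},1)\cup(1,+\infty)$ of the corollary is contained in the range $(-\infty,-3-2\sqrt{3}]\cup[-3+2\sqrt{3},+\infty)$ of Theorem 5.4 (1), so that theorem yields $\int_M[s_1(t)-s_2(t)]\,dv\ge 0$ together with its characterization of equality. The K\"{a}hler-like hypothesis makes this integral vanish, so equality holds. Because the corollary explicitly excludes $t=1$, the equality case of Theorem 5.4 (1) rules out the ``balanced but non-K\"{a}hler'' alternative (which occurs only at $t=1$) and forces $(M,J,h)$ to be a K\"{a}hler manifold.

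The argument is essentially bookkeeping once Theorem 5.4 is granted, so there is no genuinely hard analytic step; the care is conceptual. The first point to check is that K\"{a}hler-likeness of $K^t$ really does identify $s_1(t)$ with $s_2(t)$: tracing the relevant index symmetries of $K^t_{\bar{i}jk\bar{l}}$ is where one must be attentive, although for the present purpose the integral identity from Remark 5.6 already suffices and avoids this. The second point is the necessity of puncturing the interval at $t=1$: there the equality case of Theorem 5.4 (1) concludes only that the metric is balanced, not K\"{a}hler, so the value $t=1$ genuinely must be excluded, explaining why the corollary's range omits it.
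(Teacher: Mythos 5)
Your proposal is correct and follows exactly the paper's route: the paper derives this corollary from Theorem 5.4 (1) via Remark 5.6, using that a Hermitian manifold lies in $\mathcal{W}_2\oplus\mathcal{W}_3\oplus\mathcal{W}_4$ and that the K\"{a}hler-like condition forces $\int_M[s_1(t)-s_2(t)]\,dv=0$, so the equality case (with $t=1$ excluded to avoid the merely balanced alternative) gives K\"{a}hler. No gaps.
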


Note that $t=-1$ (i.e., the Bismut connection) is not contained in the above corollary. However, for the Bismut connection on a Hermitian manifold, we have

\begin{thm}
Let $(M,J,h)$ be a compact Hermitian manifold of real dimension $2n$ ($n\geq 3$).  Then
$$\int_M[s_1(-1)-s_2(-1)]dv=\int_M(|dF|^2-|\alpha_F|^2)dv.$$
In particular, if $h$ is a Gauduchon metric (i.e., $\delta\alpha_F=0$) and $s_1(-1)=s_2(-1)$, then
$h$ is a $k$-Gauduchon metric \cite{FWW}, that is
$$\sqrt{-1}\partial\bar{\partial} (F^k)\wedge F^{n-k-1}=0,$$ for $k=1,2,...,n-1$.
\end{thm}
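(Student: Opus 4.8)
The plan is to read the two Bismut scalar curvatures off the Hermitian formulas of Corollary 4.4 and then match the resulting pointwise quantity against the Fu--Wang--Wu expansion of the $k$-Gauduchon forms. First I would substitute $t=-1$ into (4.38) and (4.39). Since $t^2-2t=3$, $(t+1)^2=0$ and $t+1=0$ at $t=-1$, this gives $s_1(-1)=\frac{s}{2}+\frac14|dF|^2-|\alpha_F|^2-\frac32\delta\alpha_F$ and $s_2(-1)=\frac{s}{2}-\frac34|dF|^2$, hence the pointwise identity $s_1(-1)-s_2(-1)=|dF|^2-|\alpha_F|^2-\frac32\delta\alpha_F$. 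Because $M$ is compact and $\delta\alpha_F$ is the codifferential of a $1$-form, $\int_M\delta\alpha_F\,dv=\int_M\langle\alpha_F,d1\rangle\,dv=0$, so integrating removes the last term and yields the stated integral formula. For the ``in particular'' clause, the Gauduchon hypothesis $\delta\alpha_F=0$ collapses the pointwise identity to $s_1(-1)-s_2(-1)=|dF|^2-|\alpha_F|^2$, so that $s_1(-1)=s_2(-1)$ is exactly the pointwise equality $|dF|^2=|\alpha_F|^2$.

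It then remains to turn $|dF|^2=|\alpha_F|^2$ (with $\delta\alpha_F=0$) into the $k$-Gauduchon equations. Expanding $F^k$ and writing $dF=\partial F+\bar\partial F$ gives, for $n\ge 3$, the algebraic identity $\sqrt{-1}\partial\bar\partial F^k\wedge F^{n-k-1}=k\,\sqrt{-1}\partial\bar\partial F\wedge F^{n-2}+k(k-1)\,\sqrt{-1}\partial F\wedge\bar\partial F\wedge F^{n-3}$. Each wedge product is a top form, so I may write $\sqrt{-1}\partial\bar\partial F\wedge F^{n-2}=A\,\frac{F^n}{n!}$ and $\sqrt{-1}\partial F\wedge\bar\partial F\wedge F^{n-3}=B\,\frac{F^n}{n!}$, whereupon the $k$-Gauduchon form becomes $[kA+k(k-1)B]\frac{F^n}{n!}$. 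Using the Lefschetz decomposition $\partial F=(\partial F)_0+\frac{1}{n-1}(\alpha_F)^{1,0}\wedge F$ together with the Hodge--Riemann relations for primitive forms --- this is the computation carried out in \cite{FWW} --- one evaluates $A$ and $B$ in terms of $|dF|^2$, $|\alpha_F|^2$ and $\delta\alpha_F$; after imposing $\delta\alpha_F=0$ they reduce to $A=\frac{n-2}{2(n-1)}(|dF|^2-|\alpha_F|^2)$ and $B=-\frac{1}{2(n-1)}(|dF|^2-|\alpha_F|^2)$, so that $\sqrt{-1}\partial\bar\partial F^k\wedge F^{n-k-1}=\frac{k(n-k-1)}{2(n-1)}(|dF|^2-|\alpha_F|^2)\frac{F^n}{n!}$. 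Granting this, $|dF|^2=|\alpha_F|^2$ makes the right-hand side vanish for every $k=1,\dots,n-1$, which is the desired conclusion.

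I expect the main obstacle to be precisely this last pointwise reduction: one must separate the primitive and trace parts of $\partial F$ and $\bar\partial F$, keep track of the torsion contributions, and verify that all terms other than $|dF|^2-|\alpha_F|^2$ cancel once $\delta\alpha_F=0$. A reassuring consistency check is that the coefficient $\frac{k(n-k-1)}{2(n-1)}$ vanishes at $k=n-1$, matching the fact that $\delta\alpha_F=0$ already encodes the ordinary Gauduchon equation $\partial\bar\partial F^{n-1}=0$; and since for a Hermitian metric $|dF|^2=|(dF)_0^+|^2+\frac{1}{n-1}|\alpha_F|^2$, the hypothesis $|dF|^2=|\alpha_F|^2$ does not force $(dF)_0^+=0$, which is consistent with the conclusion being $k$-Gauduchon rather than K\"ahler.
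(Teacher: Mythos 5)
Your proposal is correct and follows essentially the same route as the paper: specialize Corollary 4.4 at $t=-1$, integrate away $\delta\alpha_F$, reduce the Gauduchon case to the pointwise identity $|dF|^2=|\alpha_F|^2$, and then expand $\sqrt{-1}\partial\bar\partial F^k\wedge F^{n-k-1}$ into the two terms $\sqrt{-1}\partial\bar\partial F\wedge F^{n-2}$ and $\sqrt{-1}\partial F\wedge\bar\partial F\wedge F^{n-3}$, whose evaluations (the paper's (5.10)--(5.11)) combine to a multiple of $k(n-k-1)(|dF|^2-|\alpha_F|^2)$, exactly as in the paper's (5.12). The only difference is that the step you defer to \cite{FWW} is carried out in the paper by a direct computation with the Chern connection torsion components, and your overall normalization of the coefficients $A$ and $B$ differs from the paper's by a constant factor, which does not affect the conclusion.
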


\begin{proof}
From (4.38) and (4.39), we have
$$s_2(-1)=\frac{s}{2}-\frac{3}{4}|dF|^2,$$
$$s_1(-1)=\frac{s}{2}+\frac{1}{4}|dF|^2-|\alpha_F|^2-\frac{3}{2}\delta\alpha_F.$$
Then the difference between $s_1(-1)$ and $s_2(-1)$ is

\begin{equation}
s_1(-1)-s_2(-1)=|dF|^2-|\alpha_F|^2-\frac{3}{2}\delta\alpha_F,
\end{equation}
which implies the following integral formula,
$$\int_M[s_1(-1)-s_2(-1)]dv=\int_M(|dF|^2-|\alpha_F|^2)dv.$$

Next, we provide a direct calculation of $\sqrt{-1}\partial\bar{\partial} (F^k)\wedge F^{n-k-1}$ by using the Chern connection.
There are some different methods for dealing with the form $\sqrt{-1}\partial\bar{\partial} (F^k)\wedge F^{n-k-1}$ \cite{IP2,LY}.

Firstly, note that
\begin{equation}
\sqrt{-1}\partial\bar{\partial} (F^k)\wedge F^{n-k-1}
=k(k-1)\sqrt{-1}\partial F\wedge\bar{\partial} F\wedge F^{n-3}+k\sqrt{-1}\partial\bar{\partial} F\wedge F^{n-2}.
\end{equation}
From (4.7), we have
$$\sqrt{-1}\partial F\wedge\bar{\partial} F=-\frac{\sqrt{-1}}{4}T_{jk}^i \overline{T_{pq}^l}
\theta^l\wedge\theta^j\wedge\theta^k\wedge\overline{\theta^i}\wedge\overline{\theta^p}\wedge\overline{\theta^q},$$
which implies
\begin{equation}
\langle\sqrt{-1}\partial F\wedge\bar{\partial} F, F^3\rangle=6T_{ki}^i\overline{T_{kj}^j}-3T_{jk}^i\overline{T_{jk}^i}.
\end{equation}
From (4.7), (4.1) and (4.23),  it follows that
\begin{align}\label{E:1}
\sqrt{-1}\partial\bar{\partial} F&=\frac{1}{2}[d(\overline{T_{jk}^i}\overline{\theta^j}\wedge\overline{\theta^k}\wedge\theta^i)]^{(2,2)}\notag\\
&=(\frac{1}{2}\overline{T_{jk,\bar{p}}^q}
+\frac{1}{4}T_{pq}^i\overline{T_{jk}^i})\theta^p\wedge\theta^q\wedge\overline{\theta^j}\wedge\overline{\theta^k},\notag
\end{align}
which implies

\begin{equation}
\langle\sqrt{-1}\partial\bar{\partial} F, F^2\rangle=T_{jk}^i\overline{T_{jk}^i}+2\overline{T_{ji,\bar{j}}^i}.
\end{equation}

From (4.10) and (4.11), for a Hermitian manifold, we have
\begin{equation}
|\alpha_F|^2=2T_{ki}^i\overline{T_{kj}^j},~~~|dF|^2=T_{jk}^i\overline{T_{jk}^i}.
\end{equation}
Put (5.9) into (5.7) and (5.8), then
\begin{equation}
\langle\sqrt{-1}\partial F\wedge\bar{\partial} F, F^3\rangle
=3(|\alpha_F|^2-|dF|^2),
\end{equation}
\begin{equation}
\langle\sqrt{-1}\partial\bar{\partial} F, F^2\rangle
=|dF|^2-|\alpha_F|^2-\delta\alpha_F.
\end{equation}

Combing (5.6), (5.10) and (5.11), we get
\begin{align}\label{E:1}
\sqrt{-1}\partial\bar{\partial} (F^k)\wedge F^{n-k-1}&=k(k-1)\frac{(n-3)!}{6}\langle\sqrt{-1}\partial F\wedge\bar{\partial} F, F^3\rangle dv\notag\\
&~~~~+k\frac{(n-2)!}{2}\langle\sqrt{-1}\partial\bar{\partial} F, F^2\rangle dv\notag\\
&=k\frac{(n-3)!}{2}[(n-k-1)(|dF|^2-|\alpha_F|^2)-(n-2)\delta\alpha_F]dv.
\end{align}
Furthermore, if $h$ is a Gauduchon metric (i.e., $\delta\alpha_F=0$ ) and $s_1(-1)=s_2(-1)$, then (5.5) implies
$|dF|^2-|\alpha_F|^2=0$.
Hence
$$\sqrt{-1}\partial\bar{\partial} (F^k)\wedge F^{n-k-1}=0,~~~~k=1,2,...,n-1.$$

\end{proof}

\bigskip
\footnotesize
\noindent\textit{Acknowledgments.}
Zhou would like to thank Professor Jiagui Peng  and Professor Xiaoxiang Jiao for their helpful suggestions and encouragements. Part of the work was done while Zhou was visiting Laboratory of Mathematics for Nonlinear Science, Fudan University;
he would like to  thank them for the warm hospitality and supports. Fu is supported in part by NSFC  10831008 and  11025103. Zhou is supported in part by NSFC 11501505.

\end{document}